\newtheorem{theorem}{Theorem}[section]
\newtheorem{lemma}[theorem]{Lemma}
\newtheorem{corollary}[theorem]{Corollary}
\newtheorem{proposition}[theorem]{Proposition}
\theoremstyle{definition}
\newtheorem{definition}[theorem]{Definition}
\newcommand{\hideqed}{\renewcommand{\qed}{}} 
\theoremstyle{remark}
\newtheorem{remark}[theorem]{Remark}
\DeclareMathOperator{\irr}{irr}
\DeclareMathOperator{\Br}{Br}
\DeclareMathOperator{\Pic}{Pic}
\DeclareMathOperator{\Gal}{Gal}
\DeclareMathOperator{\Proj}{Proj}
\DeclareMathOperator{\HH}{H}
\DeclareMathOperator{\Sym}{Sym}
\DeclareMathOperator{\cores}{cores}
\DeclareMathOperator{\res}{res}
\DeclareMathOperator{\PGL}{PGL}
\DeclareMathOperator{\rank}{rank}
\DeclareMathOperator{\car}{char}
\DeclareMathOperator{\inv}{inv}
\DeclareMathOperator{\Spec}{Spec}
\newcommand{\kbar}{\overline{k}}
\newcommand{\ksep}{k^s}
\newcommand{\defi}[1]{\textsf{#1}} 
\newcommand{\C}{{\mathbb C}}
\newcommand{\Q}{{\mathbb Q}}
\newcommand{\F}{{\mathbb F}}
\newcommand{\Z}{{\mathbb Z}}
\newcommand{\PP}{{\mathbb P}}
\newcommand{\p}{{\mathfrak p}}
\newcommand{\q}{{\mathfrak q}}
\title{The degree of irrationality of del Pezzo surfaces}
\author{Adam Logan, Anthony V\'arilly-Alvarado, and David Zureick-Brown}
\address{Department of Mathematics and Statistics, Carleton University,
1125 Colonel By Drive, Ottawa, ON K1S 5B6, Canada}
\email{adam.m.logan@gmail.com}
\urladdr{https://sites.google.com/view/adamlogan/home}
\address{Department of Mathematics, Rice University MS 136, Houston, TX
77005, USA}
\email{av15@math.rice.edu}
\urladdr{http://math.rice.edu/\~{}av15}
\address{Department of Mathematics, Amherst College, 31 Quadrangle Dr., Amherst, MA
01002, USA}
\email{dzureickbrown@amherst.edu}
\urladdr{https://dmzb.github.io/}
\subjclass[2020]{Primary 14E05, 14J26; Secondary 11G25, 11G35}
\date{October 2025}
\begin{document}

\begin{abstract}
    For an irreducible variety $X$ over a field $k$, the \defi{degree of irrationality} $\irr_k X$ is the minimal degree of a dominant rational map $X \dasharrow \mathbb{P}_k^{\dim X}$. When $X$ is a curve, this is simply the gonality of $X$. 
    We determine the possible degrees of irrationality of del Pezzo surfaces over an assortment of field types: number fields, local fields, finite fields, and arbitrary fields. 
\end{abstract}

\keywords{del Pezzo surfaces, rationality, degree of irrationality}

\maketitle


\section{Introduction}

For an irreducible variety $X$ over a field $k$, define the \defi{degree of irrationality} $\irr_k X$ to be the minimal degree of a dominant rational map $X \dasharrow \mathbb{P}_k^{\dim X}$. 
It is a $k$-birational invariant of~$X$ that generalizes the $k$-gonality of a curve $C$. 
Writing $g$ for the geometric genus of $C$, if $g = 0$, then either $C \simeq_k \PP^1_k$ and  $\irr_k C = 1$, or $C$ is anticanonically embedded as a conic in $\mathbb{P}^2_k$ and $\irr_k C = 2$. 
If $g = 1$ and $k$ is algebraically closed, then $\irr_k C = 2$, but in general $\irr_k C$ can be arbitrarily large; see~\cite{ClarkL:There-are-genus-one-curves-of-every-index-over-every-infinite-finitely-generated-field}, for example. 
When $g > 1$, gonality is understood via Brill--Noether theory: the degree of irrationality over an algebraically closed field is generically $\lfloor\frac{g+3}{2} \rfloor$, but for general $k$ if $C(k) = \emptyset$ then $\irr_k C$ can be as large as $2g-2$.

When $X$ is a surface, the situation becomes much more involved, even in the case of geometrically rational surfaces. 
By a theorem of Iskovskikh~\cite [Theorem 1]{Iskovskih:Minimal-models-of-rational-surfaces-over-arbitrary-fields}, a smooth projective geometrically rational surface $X$ over $k$ is $k$-birational to either a del Pezzo surface of degree $1 \leq d \leq 9$ or a rational conic bundle. 
If $X_d$ is a del Pezzo surface over an algebraically closed field, then $\irr_k X_d = 1$. 
Over arbitrary $k$, the degree of irrationality can be larger.

\begin{theorem}
    \label{thm:main}
    Let $X_d$ be a del Pezzo surface of degree $d$ over a field $k$. 
    Then $\irr_k X_d \in \{1,2,3,4,6\}$. 
\end{theorem}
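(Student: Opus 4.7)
The plan is case-by-case on the degree $d$ of a $k$-minimal representative in the birational class of $X_d$. Since $\irr_k$ is a $k$-birational invariant, we may assume $X_d$ is $k$-minimal; by Iskovskikh's classification, such minimal forms are organized by the image of Galois acting on the geometric Picard lattice, which lies in the Weyl group $W(E_{9-d})$.

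For each minimal form, the first task is to exhibit a dominant rational map $X_d \dasharrow \PP^2_k$ of degree in $\{1, 2, 3, 4, 6\}$. In high degree ($d \ge 5$), the classification is short, and the map comes from natural Brauer--Severi or torsor structures: the index of the Brauer class for $d = 9$ (giving $\irr_k \in \{1, 3\}$), the quadric-surface structure for $d = 8$ (giving $\irr_k \in \{1, 2\}$), the two triangles of $(-1)$-curves for $d = 6$ (giving $\irr_k \in \{1,2,3,6\}$), and direct $k$-rationality for $d = 5, 7$. For $d = 4$, I would use a conic bundle structure coming from the pencil of quadrics cutting out $X_4 \subset \PP^4$, combined with a degree-$2$ rationalization of the base of the pencil, yielding $\irr_k \le 4$. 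For $d = 3$, anticanonical projection from a $k$-point (giving degree $2$) or from a low-degree $k$-rational $0$-cycle (giving degree $6$ in the worst case) produces the desired map. For $d = 2$, the anticanonical map is itself a degree-$2$ cover of $\PP^2$. For $d = 1$, the bi-anticanonical map exhibits $X_1$ as a double cover of a quadric cone $Q \subset \PP^3$; since $Q$ has a $k$-rational vertex it is $k$-rational, so $\irr_k X_1 \le 4$.

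The second task is to rule out $\irr_k = 5$. The degree of any dominant rational map $X_d \dasharrow \PP^2_k$ is controlled by arithmetic invariants of $X_d$: indices of Brauer classes on $X_d$, orders of Galois orbits of $(-1)$-curve configurations, and parity constraints coming from intersection with $-K_X$. Because the relevant subquotients of $W(E_{9-d})$ contributing to obstructions have order dividing $12$ and the obstruction groups are $2$- and $3$-torsion, these invariants (and their products) lie in $\{1, 2, 3, 4, 6\}$; in particular no genuine degree-$5$ map to $\PP^2_k$ can arise.

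The main obstacle is this exclusion step together with the sharp lower bounds in low-degree cases: constructing covers of degree at most $6$ is relatively direct from the classification, but pinning $\irr_k$ down exactly---showing $\irr_k \ge 6$ for the most twisted forms in degrees $3$ and $6$, and handling the singular base of the $|-2K|$ map in degree $1$---requires a careful Galois-cohomological analysis that forms the technical core of the proof.
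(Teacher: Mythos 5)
Your high-level structure (case-by-case on the degree $d$) matches the paper, but the proposal has several genuine gaps, and the "obstruction-theoretic" argument for ruling out $\irr_k = 5$ is not a real argument.

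\textbf{Degree $3$.} You claim an upper bound of $6$ via projection from a $k$-rational $0$-cycle, but a smooth cubic surface $X_3 \subset \PP^3$ has $\irr_k X_3 \le 3$ simply by projecting from a general point of $\PP^3$ (this is the elementary degree bound, Lemma~\ref{lemma:basics}(2)). Your bound of $6$ does not exclude $\irr_k X_3 = 5$. Moreover, you need an argument to pin down the exact value when $X_3(k) = \emptyset$: the paper's key observation is that a pointless cubic surface also has \emph{no quadratic points}, because the line through a conjugate pair of quadratic points is $k$-rational and its residual intersection with $X_3$ is a rational point. Without this argument you cannot conclude $\irr_k X_3 = 3$ in the pointless case.

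\textbf{Degree $8$.} Your claim $\irr_k X_8 \in \{1,2\}$ is false. When $X_{8,\ksep} \cong \PP^1 \times \PP^1$ and $X_8$ (say of Picard rank $2$) is a product $C_1 \times C_2$ of conics that have no rational points and no common quadratic splitting field, one only has the degree-$4$ map $C_1 \times C_2 \to \PP^1 \times \PP^1$ given by the degree-$2$ maps on each factor; in this case $\irr_k X_8 = 4$. This value does occur over suitably chosen fields (see Trepalin's work cited in the paper), so a claim that the quadric-surface structure forces $\irr_k \le 2$ is incorrect, and the proof as sketched would simply fail here.

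\textbf{Degree $4$.} The "conic bundle structure coming from the pencil of quadrics" is not a valid construction: every point of $X_4 = Q_1 \cap Q_2$ lies on \emph{every} member of the pencil, so the pencil does not define a map $X_4 \dasharrow \PP^1$. The usual conic bundle on $X_4$ arises by projecting from a line on $X_4$, which requires a $k$-rational line --- something a minimal $X_4$ may not have. The paper instead uses the elementary bound $\irr_k X_4 \le \deg X_4 = 4$, then sharpens: if there is a rational or quadratic point one gets $\irr_k = 2$ by projecting from a well-placed secant line, and one shows $\irr_k = 3$ is impossible because a degree-$3$ map would produce a dense supply of cubic points and hence rational points.

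\textbf{Excluding $5$.} Your second task is where the real work lies, but the argument you give --- that the relevant Weyl-group subquotients have order dividing $12$ and the obstruction groups are $2$- and $3$-torsion --- is not a theorem and does not control the degree of a dominant rational map $X_d \dasharrow \PP^2_k$. The paper excludes $5$ not by a uniform Galois-cohomological argument, but by producing tight degree-by-degree bounds ($\le 2$ for $d \le 2$, $\le 3$ for $d = 3$, $\le 4$ for $d = 4$, etc.) so that $5$ simply never appears. Without those bounds the proposal does not prove the theorem.

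Minor point: for $d=1$, the double cover $X_1 \to \PP_k(1,1,2)$ composed with a birational map $\PP_k(1,1,2) \dasharrow \PP^2_k$ (the cone is $k$-rational because its vertex is $k$-rational) has degree $2$, not $4$; your weaker bound does not affect the theorem but the degree computation is wrong.
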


For a fixed field and degree only some of the values $\{1,2,3,4,6\}$ are realizable. 
We describe the possibilities over arbitrary fields, number fields, local fields, and finite fields in Table~\ref{table:delPezzos}.

\subsection{Related work} 

Previous work on the degree of irrationality has mostly concentrated on the case of an algebraically closed base field $\kbar$, and thus (in the case of surfaces) on nonnegative Kodaira dimension; see the recent survey of Chen and Martin~\cite[\S1]{chen-martin}.
For example, a very general hypersurface $X$ of degree $d$ in $\mathbb{P}_{\kbar}^{n+1}$ such that $d \geq 2n$ has $\irr_{\kbar} X = d-1$ \cite[Theorem A]{bdpelu}. 
For very general hypersurfaces in various kinds of Fano varieties, the degree of irrationality is the degree of an obvious projection; see~\cite{StapletonU:The-degree-of-irrationality-of-hypersurfaces-in-various-Fano-varieties}. 
For certain products of elliptic curves, the problem is studied in~\cite{yoshihara-product}. 
For simple abelian surfaces, the degree of irrationality is at least $3$ by \cite[Theorem~(3,4)]{alzati-pirola}, and at most $4$ by \cite[Corollary~D]{chen-stapleton}.
Deciding between these two values is an interesting problem, studied in~\cite{martin} and elsewhere.  
For torsors under simple abelian surfaces over arbitrary fields, one expects that the index (and thus the degree of irrationality) can be arbitrarily large for reasons similar to the case of genus 1 curves, but this is still unknown.
Returning to algebraically closed fields, for K3 surfaces of low degree the problem is studied in \cite{moretti, moretti-rojas}, and for Enriques surfaces in \cite{oyanedel-ibacache}; for the remaining family of surfaces of Kodaira dimension $0$, namely quotients of abelian surfaces by groups of fixed-point-free automorphisms, the problem is treated in \cite{yoshihara-hyperelliptic}, especially Theorems 2.3, 2.5, 2.8.  
In higher dimension, the degree of irrationality of a Fano threefold that is a hypersurface in weighted projective space is calculated in \cite{cheltsov-park}.
\medskip

\subsection{Notation}
Throughout, $k$ denotes a field, $\kbar/k$ is a fixed algebraic closure, and $\ksep$ is the separable closure of $k$ in $\kbar$. 
If $X$ is a $k$-scheme and $K$ is a field extension of $k$, we denote the fibered product $X\times_{\Spec k} \Spec K$ by $X_K$.

\section*{Acknowledgements}
The authors would like to thank the American Institute of Mathematics, where this work was begun during the workshop ``Degree $d$ points on algebraic surfaces" held in March 2024. 
We are grateful to Nathan Chen and Bianca Viray for organizing this workshop. 
We also gratefully acknowledge the support of the Bernoulli Center at the EPFL, where some of this work was done in the context of the program ``Arithmetic geometry of K3 surfaces" in April 2025. 
We thank Dami\'an Gvirtz-Chen for helpful conversations on conics in characteristic~$2$. 
A.\,V-A.\ was partially supported by a National Science Foundation individual grant, No.\ DMS-2302231, while working on this project. 
D.\,Z-B.\ was partially supported by a National Science Foundation individual grant, No.\ DMS-2430098, while working on this project.

\section{Preliminaries}

\subsection{Degree of Irrationality}

\begin{lemma}
    \label{lemma:basics} 
    Let $X$ be a smooth and connected variety of dimension $m$ and degree $d$ in $\PP_k^n$ over a field $k$. 
    For all but the first statement, assume that $k$ is infinite.
    \begin{enumerate}[leftmargin=*]
        \item $\irr_k X$ is a $k$-birational invariant.
        \smallskip

        \item The degree of irrationality is at most $d$.
        \smallskip

        \item Suppose that $X$ has an effective $0$-cycle $C_e$ of degree $e < n-m+1$ in general position.  Then the degree of irrationality is at most $d-e$.
        \smallskip

        \item If the degree of irrationality is $d$, then $\displaystyle \bigcup_{[K:k] \le d} X(K)$ is infinite.
    \end{enumerate}
\end{lemma}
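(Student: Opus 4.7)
The plan is to use a degree-$d$ dominant rational map $f : X \dashrightarrow \PP^m_k$, guaranteed by the hypothesis, to produce infinitely many closed points of $X$ whose residue fields have degree at most $d$ over $k$; each such point then contributes to $\bigcup_{[K:k]\le d} X(K)$.

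First, I would replace $f$ by a morphism. Letting $\Gamma \subseteq X \times_k \PP^m_k$ denote the closure of the graph of $f$, the two projections give a birational morphism $\pi_1 : \Gamma \to X$ and a morphism $\tilde f := \pi_2 : \Gamma \to \PP^m_k$ that is generically finite of degree $d$. By generic flatness there is a dense open $V \subseteq \PP^m_k$ over which $\tilde f$ is finite flat of degree $d$. The exceptional locus $E \subseteq \Gamma$ of $\pi_1$ has dimension at most $m-1$, so its image $\tilde f(E)$ is a proper closed subset of $\PP^m_k$; after shrinking $V$ we may arrange that $V \cap \tilde f(E) = \emptyset$, so that $\pi_1$ is an isomorphism on a neighborhood of every fiber over $V$.

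Because $k$ is infinite and $V \subseteq \PP^m_k$ is dense open, $V(k)$ is infinite. For each $P \in V(k)$, the scheme-theoretic fiber $\tilde f^{-1}(P)$ is a $0$-dimensional $k$-scheme of length $d$, hence supported on closed points of $\Gamma$ whose residue degrees sum to $d$ and in particular are each at most $d$. Pushing forward along the isomorphism $\pi_1$, these give closed points of $X$ with the same residue fields, and distinct $P$'s yield disjoint fibers in $\Gamma$ and hence disjoint sets of closed points in $X$. So we obtain infinitely many closed points of $X$ of residue degree at most $d$, which provides infinitely many elements of $\bigcup_{[K:k]\le d} X(K)$.

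The main obstacle is the bookkeeping around the exceptional locus of $\pi_1$: one must verify that almost all $k$-rational fibers of $\tilde f$ avoid $E$, so that the closed points we construct on $\Gamma$ really do descend to distinct closed points of $X$ of the expected residue degree. This comes down to a dimension count, so it should be routine; the only other ingredients are generic flatness and the fact that a dense open of $\PP^m_k$ contains infinitely many $k$-points when $k$ is infinite, both of which are standard.
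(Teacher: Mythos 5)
Your proposal addresses only part~(4) of the four-part lemma; parts~(1)--(3) --- birational invariance, the bound $\irr_k X \le d$ via general projection, and the improvement to $d - e$ by projecting away from the span of a general-position $0$-cycle --- are not treated at all. If you were asked to prove the full statement, this is a substantial omission, as each of those parts requires its own (short) argument.

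For part~(4) itself, your argument is correct, and it is the same idea as the paper's proof, which disposes of the statement in one line: ``pull back rational points of $\PP^m_k$ by a map of degree~$d$.'' What you add is the bookkeeping that makes this rigorous: resolving the rational map by taking the closure of the graph in $X \times_k \PP^m_k$, invoking generic flatness to get a dense open $V$ over which the map is finite of degree~$d$, and shrinking $V$ so the fibers avoid the exceptional locus of $\pi_1$. (Since $X$ is projective, $\Gamma$ is too, so $\tilde f$ is proper and $\tilde f(E)$ is indeed closed, as your argument tacitly uses.) The observation that distinct $k$-points of $V$ give disjoint fibers, hence infinitely many distinct closed points of $X$ of residue degree at most $d$, is exactly right. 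So the route is the same as the paper's; you have simply made explicit what the authors compressed to a single sentence.
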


\begin{proof}
    First statement: if $X \dasharrow X'$ is a birational equivalence, then composing it with a dominant map $X' \dasharrow \PP_k^m$ of degree $d$ gives such a map for $X$, so $\irr_k X \le \irr_k X'$; the situation is symmetric, so the claim follows.
    Second statement: a general projection from a linear subspace of $\PP^n_k$ of codimension $m+1$ (outside a Zariski closed subset of the Grassmannian) gives a map to $\PP_k^m$ of degree $d$. 
    Since $k$ is infinite and the Grassmannian is rational, such projections exist. 
    Third statement: project away from the linear subspace spanned by $C_e$. 
    For some constant $c$ this gives a map of degree $c$ to a variety of degree at most $(d-e)/c$. 
    Applying the second statement to this variety gives the desired result. 
    Fourth statement: pull back rational points of $\PP_k^m$ by a map of degree~$d$.
\end{proof}

\subsection{Background Material on del Pezzo Surfaces}

\begin{definition}\label{def:del-pezzo} \cite[Definition 24.2]{Manin-Cubic-Forms}
    A \defi{del Pezzo surface} $X$ over a field $k$ is a smooth projective geometrically integral surface whose anticanonical sheaf is ample. 
\end{definition}

\begin{remark}\label{rem:dp-base-change}
    Let $K/k$ be a field extension.
    Then $X$ is a del Pezzo surface if and only if  $X_K$ is.
\end{remark}

We write $-K_X$ for the divisor class in $\Pic X$ of the anticanonical sheaf on $X$, and denote the intersection pairing on the Picard group of $X$ by $(\ ,\,)\colon \Pic X \times \Pic X \to \Z$. 
The \defi{degree} of $X$ is the intersection number $d := (K_X,K_X)$; it satisfies $1 \leq d \leq 9$. 
If $d \ge 3$ then $-K_{X}$ is not merely ample but very ample: we obtain an embedding $X \hookrightarrow \PP^d_k$ as a smooth surface of degree $d$. 
For $d = 2, 1$ the anticanonical ring $\oplus_{n \ge 0} H^0(-nK)$ is not generated in degree $1$; rather, it requires generators of degree up to $2, 3$ respectively. 
In these cases, a minimal set of generators of $\oplus_{n \ge 0} H^0(-nK)$ gives rise respectively to a surface in $\PP_k(2,1,1,1)$ defined by a polynomial of weighted degree $4$ and a surface in $\PP(3,2,1,1)$ defined by a polynomial of weighted degree $6$. 
See \cite[\S\S\,9.4.3--11]{rational-points-poonen} or~\cite[\S1.5]{VarillyAlvarado:Arithmetic-of-del-Pezzo-surfaces}.

\subsubsection{Classification over separably closed fields}
The following result is classical. 
A collection of points in $\PP^2(k)$ is said to be in \defi{general position} if no $3$ lie on a line, no $6$ lie on a conic, and no $8$ points lie on a cubic singular at one of the points. 

\begin{theorem} 
    \cite[Theorem~1.6]{VarillyAlvarado:Arithmetic-of-del-Pezzo-surfaces}
    \cite[Theorem 9.4.4]{rational-points-poonen}.
    \label{thm:classification-dPs}
    Let $X_d$ be a del Pezzo surface over a field $k$. 
    The base change $X_{d,\ksep}$ is $\ksep$-isomorphic to a blow-up of $\PP^2_{\ksep}$ at $9 - d$ closed points in general position, unless $d = 8$, in which case it is also possible that $X_{d,\ksep}$ is isomorphic to $\PP^1_{\ksep} \times \PP^1_{\ksep}$.
    \qed
\end{theorem}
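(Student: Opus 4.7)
The plan is to base change to $\ksep$ and argue by descending induction on $d$. Since $-K_X$ being ample is preserved under base change (Remark~\ref{rem:dp-base-change}), one may assume the base field is separably closed. A key preliminary is that for any smooth projective rational surface $X$ over a separably closed field $k$, the natural map $\Pic X \to \Pic X_{\kbar}$ is an isomorphism: the Picard scheme is étale over $k$ since $H^1(X, \OO_X) = 0$, and over separably closed $k$ its $\kbar$- and $k$-points coincide. In particular, every geometric $(-1)$-curve on $X_{\ksep}$ is already defined over $\ksep$, so contractions can be performed over $\ksep$ itself.

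The core of the argument is to show that if $d \le 8$ and $X_d \not\cong \PP^1_{\ksep} \times \PP^1_{\ksep}$, then $X_d$ carries a $(-1)$-curve $E$. Contracting $E$ via Castelnuovo's criterion produces a smooth projective surface $X'$ together with $\pi\colon X_d \to X'$ satisfying $\pi^* K_{X'} = K_{X_d} - E$, so $(K_{X'})^2 = d+1$; using the Nakai--Moishezon criterion and the classical enumeration of curves on the contracted surface, one verifies that $-K_{X'}$ remains ample, so that $X'$ is again del Pezzo. By the inductive hypothesis, either $X' \cong \PP^1_{\ksep} \times \PP^1_{\ksep}$ (only possible if $d + 1 = 8$) or $X'$ is the blow-up of $\PP^2_{\ksep}$ at $8 - d$ points in general position. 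In the former subcase, an elementary transformation identifies $\PP^1\times\PP^1$ blown up at a point with $\PP^2$ blown up at two points, and in all cases one concludes that $X_d$ is a blow-up of $\PP^2_{\ksep}$ at $9 - d$ points. The general position assertion then follows from the ampleness of $-K_{X_d}$, since any forbidden configuration (three collinear points, six on a conic, or eight on a cubic singular at one of them) would produce a curve on $X_d$ with non-positive intersection against $-K_{X_d}$, violating Nakai--Moishezon.

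The main obstacle is the production of the $(-1)$-curve itself, and handling the base cases when no such curve exists. For the first, I would apply Riemann--Roch to candidate divisor classes $E \in \Pic X_d$ satisfying $E^2 = -1$ and $E \cdot K_{X_d} = -1$: any such $E$ has $\chi(E) = 1$, and since $(K_{X_d} - E) \cdot (-K_{X_d}) < 0$ forbids $K_{X_d} - E$ from being effective, $E$ itself must be effective; adjunction and the numerics then identify its unique effective representative with a smooth rational $(-1)$-curve. The existence of such classes for $d \le 7$ can be established by lattice arithmetic in $\Pic X_d$, whose intersection form is determined by $(K_{X_d})^2 = d$ together with the Hodge index theorem; in degree $d = 8$, such classes exist precisely when $X_d$ is not isomorphic to $\PP^1\times\PP^1$, which one distinguishes from $\Bl_p \PP^2$ using the intersection form on the rank-$2$ Picard group. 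For the terminal case $d = 9$ and the ruled subcase of $d = 8$, I would invoke the classification of minimal rational surfaces over a separably closed field: the minimal models are $\PP^2$ and the Hirzebruch surfaces $F_n$, of which only $\PP^2$ and $F_0 = \PP^1 \times \PP^1$ have ample anticanonical class, with $K^2 = 9$ and $8$ respectively. This completes the induction and matches the two possibilities in the statement.
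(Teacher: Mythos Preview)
The paper does not give its own proof of this theorem: it is stated with citations to \cite[Theorem~1.6]{VarillyAlvarado:Arithmetic-of-del-Pezzo-surfaces} and \cite[Theorem~9.4.4]{rational-points-poonen} and closed immediately with \qed. Your proposal therefore goes well beyond what the paper does, and in outline it is the standard argument found in those references: contract $(-1)$-curves until reaching a minimal model, which must be $\PP^2$ or $\PP^1\times\PP^1$.

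That said, your write-up has a couple of genuine gaps. First, several steps tacitly use $\chi(\OO_X)=1$ (equivalently $h^1(\OO_X)=h^2(\OO_X)=0$): your Riemann--Roch computation ``$\chi(E)=1$'' is really $\chi(E)=\chi(\OO_X)$; the rank of $\Pic X_d$ comes from Noether's formula, which needs $\chi(\OO_X)$; and your appeal to the classification of minimal \emph{rational} surfaces presupposes that $X_d$ is rational, which in turn rests on Castelnuovo's criterion and hence again on these vanishing statements. This must be established at the outset (it is, but it is not free, especially in positive characteristic). Second, the assertion that the intersection form on $\Pic X_d$ is ``determined by $(K_{X_d})^2=d$ together with the Hodge index theorem'' is incorrect as stated: Hodge index gives only the signature, and you still need the rank and unimodularity (Poincar\'e duality), together with the classification of indefinite unimodular lattices, even to identify the lattice; locating $K$ inside it to produce $E$ with $E\cdot K=-1$ is a further step. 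The usual route avoids this entirely: once rationality is known, a minimal rational surface has $K^2\in\{8,9\}$, so any del Pezzo with $d\le 7$ is automatically non-minimal and hence carries a $(-1)$-curve. Finally, the step that $-K_{X'}$ stays ample after contracting $E$ is easier than you suggest: for any irreducible curve $C'\subset X'$ with strict transform $\tilde C\ne E$, the projection formula gives $-K_{X'}\cdot C'=(-K_X+E)\cdot\tilde C>0$, so no ``classical enumeration of curves'' is needed.
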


\begin{remark}
    The general position condition on a collection of $\leq 8$ points in~$\PP^2(k)$ is equivalent to ampleness of the anticanonical sheaf in the blow-up of the collection of points.
\end{remark}

\subsubsection{Exceptional Curves} Birational morphisms $f\colon X \to Y$ between del Pezzo surfaces are mediated by exceptional curves. 
An \defi{exceptional curve} on $X$ is an irreducible curve $C \subset X_{\ksep}$ such that $(C,C) = (C,K_X) = -1$. 
If $X$ contains a $\Gal(\ksep/k)$-stable set of pairwise nonintersecting exceptional curves then this set can be blown down to obtain a del Pezzo surface $Y$ of lower degree~\cite[Corollary~24.5.2]{Manin-Cubic-Forms}. 

Conversely, let $Y$ be a del Pezzo surface of degree $d$ containing a Galois-stable set $S$ of $\ksep$-points $\{P_1,\dots,P_r\}$ with $d>r$. 
Let $X$ be the blowup of $Y$ in $S$.  Inductively define $X_0 = Y \times_k \ksep$ and let $X_i$ be the blowup of $X_{i-1}$ in the inverse image of $P_i$; by abuse of notation, we also denote the inverse image of $P_j$ in $X_i$ as $P_j$ for $j>i$. 
Clearly $X_{\ksep} \cong X_r$.  Suppose that for all $0 \le i < r$ the point $P_i$ does not lie on any exceptional curve of $X_{i-1}$. 
Then $X_r$ is a del Pezzo surface, and hence $X$ is as well.

\begin{remark} 
    The exceptional curves on a blowup of $\PP^2_k$ in $9-d$ points in general position are described in \cite[Theorem 26.2]{Manin-Cubic-Forms}, the result being repeated without proof in \cite[Proposition 9.4.7]{rational-points-poonen}.
\end{remark}

\subsubsection{Rationality in high degree} 
Del Pezzo surfaces need not be rational over their field of definition. 
However, rationality is automatic in high degree whenever the surface has a rational point. 
We use this result several times:

\begin{theorem}[{\cite[p.\ 642]{Iskovskih:Minimal-models-of-rational-surfaces-over-arbitrary-fields},\cite[Theorem~2.1]{VarillyAlvarado:Arithmetic-of-del-Pezzo-surfaces}}]
    \label{thm:high-degree-rationality}
    Let $X_d$ be a del Pezzo surface over a field $k$. 
    If $d \geq 5$ and $X_d(k) \neq \emptyset$, then $X_d$ is $k$-birational to $\PP^2_k$; in particular, $\irr_k X_d = 1$. 
    The hypothesis that $X_d(k) \neq \emptyset$ is automatic when $d = 1$, $5$ or $7$.
    \qed
\end{theorem}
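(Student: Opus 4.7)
The plan is to handle each degree $d \in \{5, 6, 7, 8, 9\}$ separately, using Theorem~\ref{thm:classification-dPs} to identify $X_{d,\ksep}$, analyzing the Galois action on the finite configuration of exceptional curves, and descending via Galois-stable blowdowns to a case where $k$-rationality is immediate. For $d=9$, $X_9$ is a Severi--Brauer surface; Ch\^atelet's theorem gives $X_9 \cong \PP^2_k$ from a $k$-point. For $d=8$, either $X_{8,\ksep} \cong \PP^1 \times \PP^1$, in which case $X_8$ is a smooth quadric in $\PP^3_k$ (via the Galois-stable divisor class $\OO(1,1) = -\tfrac12 K$) and stereographic projection from a $k$-point gives $k$-rationality, or $X_{8,\ksep}$ is the blowup of $\PP^2_{\ksep}$ at a point, in which case the unique $(-1)$-curve is Galois-stable, hence defined over $k$, and contracting it reduces to the $d=9$ case.

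For $d=7$ the three $(-1)$-curves form a path $E_1 - L - E_2$; the middle curve $L$ is distinguished by its intersection numbers, hence Galois-stable and defined over $k$. Moreover $-K_X|_L$ is a $k$-rational divisor class of degree $1$ on the geometrically rational smooth curve $L$, so the associated Brauer--Severi curve $L$ is trivial and $L \cong \PP^1_k$, giving $X_7(k) \supseteq L(k) \neq \emptyset$ automatically; contracting $L$ then reduces to the degree-$8$ case. For $d=6$ the six $(-1)$-curves form a hexagon with two alternating triangles of pairwise disjoint curves, and the $k$-point together with the combinatorics of the hexagon yields a Galois-stable set of disjoint exceptional curves to contract, descending to a higher-degree del Pezzo.

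The remaining inputs are the automatic rational points for $d=1$ and $d=5$. For $d=1$, the linear system $|-K_{X_1}|$ is a pencil with self-intersection $(-K_{X_1})^2 = 1$, so two general members meet in a single point; this is the unique base point and is $k$-rational by uniqueness. For $d=5$, the existence of a $k$-point is the classical theorem of Enriques and Swinnerton-Dyer, proved by a careful analysis of the Galois action on the Petersen-graph configuration of the ten exceptional curves; this is the deepest ingredient and the principal obstacle, since it cannot be reduced to the formal combinatorics used in higher degrees. A secondary subtlety throughout is ensuring that the chosen $k$-point survives each blowdown, which requires moving the point within a Galois-stable family (or replacing it using the rationality of each intermediate exceptional locus) whenever it lands on a curve being contracted.
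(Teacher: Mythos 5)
The paper does not prove Theorem~\ref{thm:high-degree-rationality}; it simply cites Iskovskikh and the survey of V\'arilly-Alvarado and moves on (note the \verb|\qed| attached to the statement). So what you have written is a from-scratch proof sketch, not a paraphrase of the paper's argument, and it needs to be judged on its own terms.

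Your treatment of $d = 9$, $d = 8$, $d = 7$, and the automatic $k$-point in degree $1$ are all correct and follow the standard route. But there are two genuine gaps for $d = 5$ and $d = 6$.

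For $d = 5$ you cite Enriques--Swinnerton-Dyer for the \emph{existence} of a $k$-point, but you never explain how $k$-rationality follows. This is not automatic: if you blow up $P \in X_5(k)$ in general position, you land on a degree-$4$ del Pezzo surface, and those are \emph{not} automatically $k$-rational. The extra input you need is that the exceptional divisor over $P$ is a $k$-rational $(-1)$-curve on the degree-$4$ surface, and the presence of a $k$-rational $(-1)$-curve on a degree-$4$ del Pezzo forces rationality; this is exactly the content of the paper's Lemma~\ref{lem:dp4-rat-line-rat} (one blows down the five disjoint lines meeting it to get a Severi--Brauer surface with a $k$-point). Your sketch, as written, stops short of this and so does not actually establish rationality in the key base case.

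For $d = 6$ the phrase ``the $k$-point together with the combinatorics of the hexagon yields a Galois-stable set of disjoint exceptional curves to contract'' is not correct as stated. If $P$ lies on no exceptional curve, one blows up and invokes the $d=5$ case; if $P$ lies on exactly one exceptional curve $E$, then $E$ is Galois-stable and $E \cong \PP^1_k$, so contracting $E$ reduces to $d=7$. But if $P$ lies on the intersection of two exceptional curves $E_1$, $E_2$ that are swapped by Galois, there is no Galois-stable disjoint contractible subset available: the two curves disjoint from both $E_1$ and $E_2$ are an adjacent pair in the hexagon and hence meet each other. In that case one needs a different device (e.g.\ a conic bundle structure over a conic, analogous to the conic-bundle argument the paper uses for degree $4$ when the rational point lies on two exceptional curves, or a direct argument with the class $-K - E_1 - E_2$). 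Until that case is handled, the $d = 6$ step of your induction is incomplete.

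A smaller point: you express concern about the $k$-point ``surviving'' blowdowns, but that worry is misplaced -- blowdowns are morphisms, so $k$-points always push forward. The general-position issues you should actually watch for occur when blowing \emph{up}, which is exactly where the $d = 5$ and $d = 6$ arguments need more care.
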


In low degree a similar result often allows one to conclude that the surface is unirational. 

\begin{theorem}[{\cite[Theorem~29.4, Theorem~30.1]{Manin-Cubic-Forms}}] 
    \label{thm:low-degree-unirationality}
    Let $X$ be a del Pezzo surface of degree $2 \le d \le 4$ with a rational point $P$; if $d = 2$ then suppose that $P$ does not lie on any exceptional curve or on the ramification divisor of the anticanonical map $X \to \PP^2_k$. 
    Then $X$ is unirational. 
    In particular, if $k$ is infinite then $X(k)$ is Zariski dense.
    \qed
\end{theorem}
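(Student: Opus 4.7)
I would prove the theorem by case analysis on $d \in \{2,3,4\}$, in each case exhibiting a dominant rational $k$-map from a rational $k$-variety to $X$. The Zariski density assertion then follows immediately: over an infinite $k$, the set $\PP^2_k(k)$ is Zariski dense, so its image under a dominant rational map is Zariski dense in $X$.

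For $d=3$, the natural tool is the tangent plane construction. The hyperplane $T_P X_3 \subset \PP^3_k$ meets $X_3$ in a plane cubic $C_P$ singular at the $k$-rational point $P$; hence $C_P$ has geometric genus $0$ and is $k$-birational to $\PP^1_k$. Repeating this construction with $Q$ varying along $C_P$ yields a $1$-parameter family of rational cubics on $X_3$, each singular at its base point, and hence a dominant rational map $C_P \times \PP^1_k \dasharrow X_3$ whose source is $k$-rational. For $d=4$, I would reduce to the $d=3$ case via projection from $P$: the map $\pi_P \colon X_4 \dasharrow \PP^3_k$ is birational onto a cubic surface $Y \subset \PP^3_k$, and the exceptional divisor of $\Bl_P X_4 \to X_4$ maps to a $k$-rational conic on $Y$. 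Unirationality of $Y$, obtained by applying the $d=3$ argument at a rational point of this conic, transfers back to $X_4$.

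The main obstacle is $d=2$. The anticanonical morphism $\phi \colon X_2 \to \PP^2_k$ is a double cover branched along a smooth plane quartic $B$, and the hypothesis that $P$ is not on the ramification divisor ensures $\phi(P) \notin B$. The key building block is that $\phi^{-1}(\ell)$ has geometric genus $0$ whenever $\ell$ is a tangent line to $B$ at a smooth point, since the double cover acquires an extra branching there. The difficulty is that tangent lines to $B$ are parametrized by the dual curve, which has positive genus, so individual tangent lines are rarely defined over $k$. Following Manin, I would exploit the pencil of lines through $\phi(P)$ together with the hyperelliptic involutions on the genus-$1$ preimages $\phi^{-1}(\ell) \subset X_2$ to produce, over $k$, a rational $2$-parameter family of rational curves through $P$ sweeping out $X_2$. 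The hypothesis that $P$ avoids exceptional curves is precisely what prevents this family from degenerating, and is the technical heart of the argument.
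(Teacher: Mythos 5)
The paper provides no proof of this statement: it is a direct citation of Manin, followed by a remark clarifying that Manin's argument begins by blowing up $P$ to obtain a del Pezzo surface of degree $d-1$, which is precisely why the $d=2$ case requires $P$ to avoid the ramification curve. So the ``paper's proof'' you are to be compared against is essentially that one-sentence remark together with the reference.

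Against that benchmark, your reconstruction has the right shape for $d=3$ (Segre's tangent-plane construction) and $d=4$ (project from $P$ to reduce to a cubic surface), and the deduction of Zariski density from unirationality over an infinite field is correct. Two points to tighten in those cases. First, the exceptional divisor $E$ of $\Bl_P X_4$ has degree $(-K_{\Bl_P X_4},E)=1$ in the anticanonical embedding, so its image in $Y$ is a line, not a conic. Second, the theorem imposes no position hypothesis on $P$ when $d=3$ or $4$, so you must address the degenerate cases your sketch silently excludes: if $P$ is an Eckardt point or lies on a line of $X_3$, then $C_P$ is reducible and not birational to $\PP^1_k$; and if $P$ lies on a line of $X_4$, then $\Bl_P X_4$ carries a $(-2)$-curve and $\pi_P$ does not map onto a smooth cubic (though in that case $X_4$ is in fact $k$-rational, cf.\ Lemma~\ref{lem:dp4-rat-line-rat}).

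The genuine gap is $d=2$. You correctly identify the obstruction --- the tangent lines to the branch quartic through $\phi(P)$ are parametrized by a positive-genus dual curve, so none need be individually $k$-rational --- but then write that ``Following Manin, I would exploit the pencil of lines through $\phi(P)$ together with the hyperelliptic involutions\ldots'', which names the objects one would like to use without actually constructing a dominant $k$-rational map from a rational surface to $X_2$. You yourself call this ``the technical heart of the argument'' and leave it unresolved, so the theorem is not proved. Note also that the paper's remark points to a different first move than yours: blow up $P$ (the hypotheses that $P$ avoids the ramification divisor and the exceptional curves are exactly what guarantees $\Bl_P X_2$ is a del Pezzo surface of degree $1$, see~\cite[Corollary~2.11]{Salgado-Testa-VA}), and run the construction on the degree-$1$ model. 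Your sketch never makes contact with that blowup step, which is where the hypotheses on $P$ actually get used.
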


\begin{remark}
    The hypothesis that in degree $2$ the point $P$ not lie on the ramification curve $R$ of $X \to \PP^2_k$ is not stated in~\cite[Theorem~29.4]{Manin-Cubic-Forms}, but it is used implicitly in the proof. 
    The first step of the proof is to blow up the given point to obtain a lower degree del Pezzo surface. 
    When $d=2$ one must avoid $R$; see~\cite[Corollary~2.11]{Salgado-Testa-VA}.
\end{remark}

\subsubsection{Finite Fields}
Although del Pezzo surfaces over arbitrary fields need not carry rational points (unless $d = 1$, $5$, or $7$), the situation over finite fields is different. 
The following fact is a corollary of~\cite[Theorem 23.1]{Manin-Cubic-Forms}.

\begin{lemma}\label{lemma:dps-finite-field-pts}
    Let $X$ be a del Pezzo surface over a finite field $k$.  Then $X(k) \ne \emptyset$. 
\end{lemma}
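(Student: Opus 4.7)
The plan is to extract the lower bound $\#X(\F_q) \geq 1$ from Manin's Theorem~23.1 (the Grothendieck--Lefschetz trace formula for geometrically rational surfaces over a finite field). Since $X$ is geometrically rational, its odd $\ell$-adic cohomology vanishes and the cycle class map gives an isomorphism $H^2_{\text{\'et}}(X_{\kbar}, \Q_\ell(1)) \cong \Pic X_{\kbar} \otimes \Q_\ell$; the trace formula thus reduces to
$$\#X(\F_q) = q^2 + q \cdot t + 1, \qquad t := \operatorname{tr}\bigl(\operatorname{Frob}_q \,\big|\, \Pic X_{\kbar}\bigr),$$
so it suffices to show $q + t \geq 0$.

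Next I would bound $t$ from below. The Galois action on the lattice $\Pic X_{\kbar} \cong \Z^{10-d}$ preserves the intersection pairing and fixes $-K_X$, so it factors through a finite subgroup of the orthogonal group of the negative-definite orthogonal complement $(-K_X)^\perp$. Consequently every Frobenius eigenvalue on $\Pic X_{\kbar}$ is a root of unity, at least one of them equals $+1$ (coming from $-K_X$), and the remaining $\rho - 1$ eigenvalues (with $\rho = 10 - d$) have absolute value $1$. This yields $t \geq 2 - \rho$ and therefore
$$\#X(\F_q) \geq q^2 - (\rho - 2)q + 1.$$

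This bound dispatches most cases at once. For $d \in \{1, 5, 7\}$ a rational point is already granted by Theorem~\ref{thm:high-degree-rationality}; for $d = 9$, $X$ is a Brauer--Severi form of $\PP^2$ and the vanishing $\Br(\F_q) = 0$ forces $X \cong \PP^2_{\F_q}$; for $d = 8$ the bound becomes $q^2 + 1 > 0$; and for $d = 6$ it becomes $(q-1)^2 \geq 1$. What remains is the trio $d \in \{2, 3, 4\}$ with small $q$, where the naive estimate can be negative (for example $q^2 - 6q + 1 < 0$ when $d = 2$ and $q \leq 5$).

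The main obstacle will be handling these finitely many exceptional pairs. I would address them by refining the lower bound on $t$: since Frobenius acts on $(-K_X)^\perp$ through the Weyl group $W(E_{9-d})$ (with $E_5 = D_5$), its characteristic polynomial on that sublattice is a product of cyclotomic polynomials whose degrees sum to $9 - d$. The set of values of $t$ that this constraint allows is short and tabulable from the conjugacy classes of $W(D_5)$, $W(E_6)$, and $W(E_7)$; a direct case check then shows $q + t \geq 0$ in each remaining $(d, q)$ pair, completing the argument.
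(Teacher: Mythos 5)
Your setup is correct and matches the paper's: you invoke the Lefschetz trace formula for a geometrically rational surface and correctly reduce to $\#X(\F_q) = q^2 + q\,t + 1$ with $t = \operatorname{tr}\bigl(\operatorname{Frob}_q \mid \Pic X_{\kbar}\bigr)$. But from there you head in the wrong direction, and the route you propose has a genuine gap.

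Your plan is to prove $q + t \geq 0$ by bounding $t$ from below. The first bound $t \geq 2 - \rho$ is indeed too weak for $d \in \{2,3,4\}$ and small $q$, as you note, but the proposed ``refinement'' by tabulating conjugacy classes of the Weyl group does not close the gap. Take $d = 2$: the longest element of $W(E_7)$ is $-1$, with trace $-7$ on $K_X^\perp$ and hence $t = 1 - 7 = -6$; then $q + t = q - 6 < 0$ for $q \in \{2,3,4,5\}$. A direct tabulation of conjugacy classes therefore does \emph{not} give $q + t \geq 0$ in all cases. What you would actually need is the statement that these ``bad'' conjugacy classes do not arise as Frobenius actions over small fields, but the cleanest way to see that is precisely the nonnegativity constraint $\#X(\F_q) \geq 0$ applied to the point count formula --- which makes the argument circular as stated, or at least requires a separate nontrivial realizability argument that you have not given.

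The paper's proof is much shorter and sidesteps all of this: since Frobenius acts on the lattice $\Pic X_{\kbar} \cong \Z^{10-d}$ by an integer matrix, $t$ is an integer, hence $\#X(\F_q) = q^2 + qt + 1 \equiv 1 \pmod{q}$. But $\#X(\F_q)$ is a nonnegative integer, and a nonnegative integer congruent to $1$ modulo $q$ (with $q \geq 2$) is at least $1$. No bound on $t$ is needed at all. You had all the ingredients on the table; the observation you missed is that the \emph{congruence} forces positivity directly, without any eigenvalue estimate.
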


\begin{proof} 
    We count points by means of the action of Frobenius on \'etale cohomology. 
    The traces of Frobenius on $\HH^0$ and $\HH^4$ contribute $\#k^2$ and $1$ respectively, while $\HH^2(X) = \rank \Pic X_{\bar k}$, so the trace of Frobenius on $\HH^2$ is $\#k$ times the trace of Frobenius acting on $\Pic X_{\bar k}$. 
    It follows that $\#X(k) \equiv 1 \bmod \#k$, whence the claim. 
\end{proof}

\subsubsection{Root systems, Weyl groups, and Galois actions}
\label{sss:Rd}

Within the geometric Picard group $\Pic(X_{d,\kbar}) = \Pic(X_{d,\ksep})$ of a del Pezzo surface $X_d$ of degree $d \leq 6$ there lurks a root system $R_d$ in $K_{X_d}^\perp$, whose type is given by
\begin{equation*}
    R_d = 
    \begin{cases}
        A_2\times A_1 & \text{if } d = 6,\\
        A_4 & \text{if } d = 5,\\
        D_5 & \text{if } d = 4,\\
        E_{9 - d} & \text{if } d = 3, 2, \text{ or }1;
    \end{cases}
\end{equation*}
see~\cite[Proposition~25.2 and Theorem~25.4]{Manin-Cubic-Forms}. 
Since the action of $\Gal(\ksep/k)$ on $\Pic(X_{d,\ksep})$ preserves the canonical class $K_{X_d}$ and the intersection pairing, there is a representation 
\[
    \Gal(\ksep/k) \to O\big(K_{X_d^\perp}\big),
\]
and the image of this map factors through the Weyl group $W(R_d)$ of the root system $R_d$~\cite[Theorem~23.9(ii)]{Manin-Cubic-Forms}, which is a finite group. 

Among other things, this special structure helps identify or compute the Galois cohomology group $\HH^1(\Gal(\ksep/k)),\Pic X_{d,\ksep})$, which is a birational invariant for $X_d$ (see~\S\ref{sss:An-irrationality-criterion} below). 
In particular, when $k$ is a finite field, this group has the form $\HH^1(\langle C_d\rangle,\Z^{9-d})$, where $\langle C_d\rangle < W(R_d)$ is any cyclic subgroup generated by an element in the conjugacy class $C_d \subset W(R_d)$ of the Frobenius action on $\Pic(X_{d,\ksep}) \simeq \Z^{9-d}$.

\subsubsection{An irrationality criterion}
\label{sss:An-irrationality-criterion}
In due course we need to establish the existence of some del Pezzo surfaces over a field $k$ that are not $k$-birational to $\PP^2_k$. 
We use a $k$-birational invariant associated to the Brauer group of the surface, namely, the Galois cohomology group $\HH^1(\Gal(\ksep/k)),\Pic X_{\ksep})$, which is a $k$-birational invariant for a smooth projective geometrically integral variety over a field $k$ (this follows by combining the proof of~\cite[Theorem~29.1]{Manin-Cubic-Forms} with \cite[\S2.A]{colliot-thelene-sansuc-La-descente-II}). 
This cohomology group is trivial for $X = \PP^2_k$, because $\Gal(\ksep/k)$ is a profinite group acting trivially on the free $\Z$-module $\Pic \PP^2_k \simeq \Z$~\cite[Example~4.2.3]{Gille-Szamuely-CSAs}. 
Consequently:

\begin{proposition}
    \label{prop:irrational}
    If $X$ is a smooth projective surface over a field $k$ and $\HH^1(\Gal(\ksep/k),\Pic X_{\ksep}) \neq 0$, then $X$ is not $k$-rational and $\irr_k X > 1$.
    \qed
\end{proposition}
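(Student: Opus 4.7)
The plan is to bundle both conclusions into one observation: $\irr_k X = 1$ is equivalent to $X$ being $k$-birational to $\PP_k^2$. Indeed, a dominant rational map $f\colon X \dashrightarrow \PP_k^2$ of degree $1$ is generically injective on $\kbar$-points, so the function field extension $k(X)/f^*k(\PP_k^2)$ has degree $1$, making $f$ birational; conversely, any $k$-birational equivalence is itself a degree $1$ dominant rational map. So it suffices to show that $X$ is not $k$-birational to $\PP_k^2$.

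For this I would invoke the two ingredients recalled just before the statement. First, the group $\HH^1(\Gal(\ksep/k),\Pic X_{\ksep})$ is a $k$-birational invariant of smooth projective geometrically integral varieties, by the combination of \cite[Theorem~29.1]{Manin-Cubic-Forms} and \cite[\S2.A]{colliot-thelene-sansuc-La-descente-II} cited in the preceding paragraph. Second, this invariant vanishes for $\PP_k^2$: the group $\Pic \PP^2_{\ksep} \simeq \Z$ is generated by the class of a hyperplane, which is Galois-stable, so the action is trivial, and hence
\[
    \HH^1(\Gal(\ksep/k),\Pic \PP^2_{\ksep}) \simeq \mathrm{Hom}_{\mathrm{cts}}(\Gal(\ksep/k),\Z) = 0,
\]
the final equality holding because any continuous homomorphism from a profinite group to the discrete group $\Z$ is trivial.

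Putting the two ingredients together: if $X$ were $k$-birational to $\PP_k^2$, then $\HH^1(\Gal(\ksep/k),\Pic X_{\ksep})$ would agree with $\HH^1(\Gal(\ksep/k),\Pic \PP^2_{\ksep}) = 0$, contradicting the hypothesis. Thus $X$ is not $k$-rational, and by the equivalence above $\irr_k X > 1$. There is no real obstacle here, since all the nontrivial content—namely the birational invariance of the first cohomology of the geometric Picard group and its computation for projective space—has been done in the surrounding text; the proof is purely a matter of assembly.
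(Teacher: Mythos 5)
Your proof is correct and takes essentially the same route as the paper: show the Galois cohomology of the geometric Picard group is a $k$-birational invariant, compute it to be zero for $\PP^2_k$ (trivial profinite action on the discrete free module $\Z$), and conclude. The only thing the paper leaves implicit, and you make explicit, is the elementary observation that a degree-$1$ dominant rational map to $\PP^2_k$ is automatically birational (since it identifies function fields), so $\irr_k X = 1$ and $k$-rationality coincide.
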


\section{Proof of Theorem~\ref{thm:main}}
We organize the proof by the degree $d$ of the surface, going from low degree to high degree. 
We determine the possible degrees not only over arbitrary fields but also over number fields, local fields, and finite fields.  
We begin with a well-known lemma. 

\begin{lemma}
    \label{lemma:nonrational-low-degree} 
    For $1 \le d \le 4$ there exists a nonrational del Pezzo surface $X_d$ of degree $d$ over a number field, a finite field, and a local field $k$.
\end{lemma}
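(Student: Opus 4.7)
The strategy is to apply Proposition~\ref{prop:irrational}: for each $d \in \{1,2,3,4\}$, it suffices to exhibit, over each type of field in the statement, a del Pezzo surface $X_d$ with $\HH^1(\Gal(\ksep/k), \Pic X_{d,\ksep}) \neq 0$. Since this action factors through the Weyl group $W(R_d)$ acting on $K_{X_d}^\perp \cong \Z^{9-d}$, the problem splits into a group-theoretic step and a realization step.

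For the group-theoretic step I would look for a cyclic subgroup $\langle c \rangle \le W(R_d)$ with $\HH^1(\langle c \rangle, \Z^{9-d}) \neq 0$; for a cyclic group this cohomology equals $\ker(N)/(c-1)\Z^{9-d}$ where $N$ is the norm, so it reduces to a finite linear-algebra computation on $W(D_5), W(E_6), W(E_7), W(E_8)$ for $d = 4, 3, 2, 1$. The conjugacy classes with nontrivial cohomology are classical (appearing in tables of Swinnerton-Dyer, Manin, and Urabe); a safe concrete choice is a suitable Coxeter-type element.

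The realization step then produces a del Pezzo surface whose Galois action on the Picard lattice contains the chosen $c$. Over a finite field $\F_q$ the absolute Galois group is topologically generated by Frobenius, so one wants the Frobenius conjugacy class in $W(R_d)$ to coincide with $[c]$; explicit surfaces can be written down for each degree (an intersection of two quadrics in $\PP^4$ for $d=4$, a cubic in $\PP^3$ for $d=3$, a double cover of $\PP^2$ branched over a quartic for $d=2$, and a sextic in $\PP(3,2,1,1)$ for $d=1$) with coefficients chosen so that the characteristic polynomial of Frobenius on $\Pic$ matches that of $c$. Over a number field, one constructs $X_d$ over $\Q$ with large arithmetic monodromy via Hilbert irreducibility applied to the versal family and then base-changes; over a local field, one completes such a number-field example at a place whose decomposition group realizes $\langle c \rangle$ (which exists by Chebotarev density applied to the splitting field of the exceptional curves), or alternatively invokes Krasner's lemma to prescribe local ramification directly.

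The main obstacle is the bookkeeping needed to certify that the chosen surfaces really do have the claimed Galois action on $\Pic X_{d,\ksep}$. Degree $d = 1$ is the most delicate, since rational points always exist (cf.~Theorem~\ref{thm:high-degree-rationality}) and the anticanonical model is a weighted hypersurface rather than a subvariety of ordinary projective space; nonetheless the cohomological invariant $\HH^1$ detects nonrationality independently of the existence of $k$-points, and controlling the monodromy on the $240$ exceptional curves of a generic sextic in $\PP(3,2,1,1)$ suffices.
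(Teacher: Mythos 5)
Your proposal is correct but organized differently from the paper's in two of the three cases. You apply the cohomological criterion of Proposition~\ref{prop:irrational} uniformly: over every field type, exhibit a surface with $\HH^1(\Gal(\ksep/k),\Pic X_{d,\ksep}) \neq 0$ by realizing a suitable conjugacy class $c \in W(R_d)$ with $\HH^1(\langle c\rangle,\Z^{9-d}) \neq 0$. The paper uses this criterion only for finite and local fields. Over finite fields, rather than hand-tuning explicit surfaces so that the Frobenius conjugacy class is the chosen $c$, the paper invokes a general realization theorem (Banwait--Fit\'e--Loughran) guaranteeing that over a sufficiently large finite field \emph{every} conjugacy class of $W(R_d)$ occurs as a Frobenius action on $\Pic X_{d,\ksep}$; this packages the realization step that you outline in a concrete but unverified way. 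Over local fields, the paper lifts the finite-field example while preserving the Galois action on the geometric Picard group, which is essentially equivalent to your suggestion of completing a number-field example at a Chebotarev-chosen place. The real divergence is over number fields: instead of running Hilbert irreducibility on a versal family to guarantee large arithmetic monodromy and then applying the $\HH^1$ criterion, the paper simply cites known examples in the literature of minimal del Pezzo surfaces of each degree $1 \le d \le 4$ that have rational points yet fail weak approximation, and then uses the fact that a $k$-rational variety over a number field satisfies weak approximation. Both routes are valid; yours is more conceptually uniform and self-contained in principle, but the realization step over finite fields and the large-monodromy step over number fields each hide a nontrivial amount of work that the paper discharges by citation.
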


\begin{proof} 
    Over number fields, the references in~\cite[p.\ 308]{VarillyAlvarado:Arithmetic-of-del-Pezzo-surfaces} contain examples of minimal del Pezzo surfaces of degrees $d = 1,\dots,4$ with rational points that fail to satisfy weak approximation, and thus cannot be rational~\cite[Lemma~1.2]{VarillyAlvarado:Arithmetic-of-del-Pezzo-surfaces}. 
    If $d$ is fixed and $k$ is finite and sufficiently large, every conjugacy class in the Weyl group $W(R_d)$ occurs as the action of Frobenius on $\Pic X_{d,\ksep} \simeq \Z^{9-d}$ for some $X_d$ defined over $k$~\cite[Corollary~1.8]{bfl}. 
    Now observe that for each $d$, there is at least one conjugacy class $C_d$ in $W(R_d)$ such that $\HH^1(\langle C_d\rangle,\Z^{9-d}) \neq 0$ (e.g., see~\cite{Urabe-Tables} for $d = 2$, $3$). 
    Now apply Proposition~\ref{prop:irrational}. 

    The nonrational surfaces over finite fields can be lifted to a local field while preserving the Galois action on the geometric Picard group. 
    We can thus find surfaces $X_d$ over a local field such that $\HH^1(\Gal(\ksep/k),\Pic X_{d,\ksep}) \ne 0$, and so the surface is not rational, by Proposition~\ref{prop:irrational}.
\end{proof}

\noindent \underline{\defi{Degree 1}}.
The surface $X_1$ is isomorphic to a smooth sextic in the weighted projective space $\PP_k(1,1,2,3) := \Proj k[x,y,z,w]$, and restriction of the projection $\PP_k(1,1,2,3) \dasharrow \PP_k(1,1,2)$ gives a double covering $\pi\colon X_1\to \mathbb{P}_k(1,1,2)$~\cite[\S1]{VarillyAlvarado:Arithmetic-of-del-Pezzo-surfaces}. 
Weighted projective spaces are toric varieties, so they are rational. 
Thus there exists a dominant rational map $X_1 \dasharrow \PP^2_k$ of degree $2$. 
Alternatively, the $k$-basis $\{x^2,xy,y^2,z\}$ of $\Gamma\left(\mathbb{P}_k(1,1,2),\mathscr{O}(2)\right)$ gives an embedding $\mathbb{P}_k(1,1,2) \hookrightarrow\mathbb{P}^3_k = \Proj k[x_0,x_1,x_2,x_3]$ as the singular cone $x_0x_2 - x_1^2 = 0$. 
Composing with projection away from $(1:1:1:1)$ gives a birational map $f\colon \PP_k(1,1,2) \dasharrow \PP^2_k$. The composition $f\circ \pi \colon X_1 \dasharrow \PP^2_k$ witnesses the inequality $\irr_k X_1 \leq 2$. 
By Lemma~\ref{lemma:nonrational-low-degree}, there are del Pezzo surfaces of degree $1$ that are not $k$-rational, so $\irr_k X_1 = 2$ occurs for number fields, finite fields, and local fields. 
\\

\noindent \underline{\defi{Degree 2}}.
The anticanonical map $\phi \colon X_2 \to \PP^2_k$ is a degree 2 cover~\cite[Remark 1.11]{VarillyAlvarado:Arithmetic-of-del-Pezzo-surfaces}, so $\irr_k X_2 \le 2$.  Again, by Lemma~\ref{lemma:nonrational-low-degree} it is not always $1$, and this holds also for number fields, finite fields, and local fields. \\

\noindent \underline{\defi{Degree 3}}. 
The anticanonical map $\phi\colon X_3 \to \PP^3_k$ embeds $X_3$ as a smooth cubic surface in $\PP^3_k$. 
Combining Lemmas~\ref{lemma:dps-finite-field-pts} and~\ref{lemma:nonrational-low-degree}, one deduces that $X_3$ can be nonrational even if $X_3(k) \neq \emptyset$.
In this case we have $\irr_k X_3 = 2$, because projection from the rational point is a map of degree $2$. 
This case occurs for finite fields, number fields, and local fields. 
On the other hand, if $X_3$ has no rational points, it has no quadratic points either. 
Indeed, if $P, P^\sigma$ are points that are conjugate over a separable quadratic extension of $k$, then the line $L$ through $P, P^\sigma$ is $k$-rational. 
Similarly, if $P$ is defined over an inseparable quadratic extension $K/k$, then there is a unique $k$-rational line $L \ni P$. 
If $L \subset X_3$ we have many rational points, and if not the third point of intersection is rational. 
Thus, if $X_3(k) = \emptyset$, we must have $\irr_k X_3 > 2$.  On the other hand, by Lemma~\ref{lemma:basics} we have $\irr_k X_3 \le 3$, so it is exactly $3$. 
There are smooth cubic surfaces over number fields (see, e.g.,~\cite[47.1]{Manin-Cubic-Forms}) with no rational points. 
Likewise, there are smooth cubic surfaces over local fields with no rational points, for example
\begin{equation}
    \label{eq:Pointless-Cubic-Over-Local-Field}
    X_3/\Q_{11}: 11w^3 + x^3 + 8x^2y + 7xy^2 + 10y^3 + 8x^2z + 6xyz + 10y^2z + 8xz^2 + 10yz^2 + 8z^3 = 0
\end{equation}
in $\PP^3_{\Q_{11}}$, so $\irr_k X_3 = 3$ can occur in these cases. 
By Lemma~\ref{lemma:dps-finite-field-pts} we cannot have $\irr_k X_3 = 3$ if $k$ is finite. 

\begin{remark}
    We briefly explain one way to construct smooth cubic surfaces over local fields without a rational point, like~\eqref{eq:Pointless-Cubic-Over-Local-Field}. 
    Let $K$ be a local field with residue field $k$, let $L/K$ be an unramified extension of degree $3$, let $\ell \subset \PP^2(L)$ be a line with no points that reduce to a point of $\PP^2(k)$, and let $C$ be the union of conjugates of $\ell$. 
    Let $S$ be the surface defined by $\pi_K w^3 = f(x,y,z)$, where $\pi_K$ is a uniformizer and $f$ is the equation of $C$. 
    Then $S$ has no $K$-points, because the only $k$-point is $(1:0:0:0)$, the valuation of $\pi_K w^3$ is $1 \bmod 3$, and that of $f(x,y,z)$ is a multiple of $3$.

    Now let $S'$ be a surface defined by an equation $f'$ of the form 
    \[
        -\pi_K w^3 + f(x,y,z) + \sum_{i=0}^2 c_i w^i g_{3-i}(x,y,z),
    \]
    where each $g_{3-i}$ is a homogeneous polynomial of degree $3-i$ with integral coefficients and $v(c_i) \ge \min(1,i)$. 
    Since the equations of $S$ and $S'$ are the same mod $\pi_K$, the $k$-points of $S'$ are the same as those of $S$: that is, the only one is $(1:0:0:0)$.
    Evaluating $f'$ at $(w_0:x_0:y_0:z_0)$, where $v(w_0) = 0$ and the others all have positive valuation, again we see that $v(-\pi_K w_0^3) = 1$, while all the other terms have $v > 1$. 
    Thus there are no $K$-points. 
    Since cubic polynomials of this form are Zariski dense in the $\PP^{19}$ of nonzero cubic polynomials up to scaling, while cubic polynomials defining singular surfaces constitute a proper closed subspace of this space, we obtain smooth cubic surfaces over $K$ with no rational points. 

    The example above was constructed in this way, randomly choosing the line over $\F_{11^3}$ defined by $x + \alpha^{625} y + \alpha^{223} z$ where $\alpha^3 + 2\alpha + 9 = 0$, lifting the union of its $\F_{11}$-conjugates to $\Q$ by choosing the integer in $[0,11)$ representing each element of $\F_{11}$, and adding $11w^3$. 
\end{remark}

\noindent \underline{\defi{Degree 4}}. 
The anticanonical map $\phi\colon X_4 \to \PP^4_k$ embeds $X_4$ as a smooth intersection of two quadrics in $\PP^4_k$. 
By Lemma~\ref{lemma:basics} we know that $\irr_k X_4 \le 4$.

\begin{lemma}
    \label{lem:dp4-rat-line-rat}
    Let $X_4$ be a del Pezzo surface of degree $4$ over $k$ with a $k$-rational exceptional curve~$E$. 
    Then $X_4$ is $k$-rational.  
\end{lemma}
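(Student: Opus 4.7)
The plan is to contract the exceptional curve $E$ over $k$ and then invoke the automatic rationality of del Pezzo surfaces of degree $5$ recorded in Theorem~\ref{thm:high-degree-rationality}.

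First I would observe that the singleton $\{E\}$ is trivially a $\Gal(\ksep/k)$-stable set of pairwise nonintersecting exceptional curves on $X_4$, since $E$ is defined over $k$. By the blowing-down principle recalled in the discussion preceding this lemma (specifically \cite[Corollary~24.5.2]{Manin-Cubic-Forms}), there is a $k$-morphism $\pi\colon X_4 \to Y$ that contracts $E$ to a point of $Y$, where $Y$ is a smooth projective $k$-surface with $K_Y^2 = K_{X_4}^2 + 1 = 5$, and in fact $Y$ is itself a del Pezzo surface over $k$.

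Next I would apply Theorem~\ref{thm:high-degree-rationality} to $Y$: since $\deg Y = 5$, the hypothesis $Y(k) \neq \emptyset$ is automatic (and, in any case, the image $\pi(E)$ is a $k$-point of $Y$), and the theorem concludes that $Y$ is $k$-birational to $\PP^2_k$. Because $\pi$ is a birational $k$-morphism, $X_4$ is $k$-birational to $Y$, and hence to $\PP^2_k$. This is the desired $k$-rationality of $X_4$.

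The only point that requires real care, and hence the main (mild) obstacle, is verifying that the contracted surface $Y$ is still del Pezzo rather than merely a smooth surface with $-K_Y$ big; this is exactly the content of the cited Manin corollary, since contracting a single $k$-rational $(-1)$-curve on a del Pezzo surface of degree $\geq 2$ preserves ampleness of the anticanonical class. Beyond this appeal, the argument is a straightforward concatenation of the two structural results already in hand.
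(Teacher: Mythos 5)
Your proof is correct, and it takes a genuinely different route from the paper's. You contract $E$ itself, obtaining a del Pezzo surface $Y$ of degree $5$, and then invoke the ``automatic rationality'' of degree-$5$ del Pezzo surfaces recorded in Theorem~\ref{thm:high-degree-rationality}. The paper instead contracts the \emph{five exceptional curves meeting} $E$ (which form a Galois-stable, pairwise disjoint collection because $E$ is $k$-rational), producing a degree-$9$ surface $X_9$, and then observes that the image of $E$ under $X_4 \to X_9$ is a $k$-rational point, so $X_9$ is a form of $\PP^2_k$ with a rational point and hence is $\PP^2_k$. Both are short and both lean on \cite[Corollary~24.5.2]{Manin-Cubic-Forms} to guarantee the contraction stays in the del Pezzo world and on Theorem~\ref{thm:high-degree-rationality} to finish. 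Your version is arguably leaner: it avoids the combinatorial facts about the $(-1)$-curve graph of a quartic del Pezzo (that exactly five exceptional curves meet $E$ and that they are pairwise disjoint), relying instead on the standing fact that degree-$5$ del Pezzo surfaces always have a rational point --- though in your setting you do not even need that, since $\pi(E)$ already supplies one, as you note. The paper's version has the small virtue of landing directly on a Brauer--Severi variety, where rationality given a point is the most elementary instance of Theorem~\ref{thm:high-degree-rationality}, but this is a stylistic difference rather than a logical one.
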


\begin{proof}
    There are $5$ exceptional curves meeting $E$ and they are pairwise disjoint, so they may be blown down to obtain a del Pezzo surface $X_9$ of degree $9$. 
    Since $X_4 \to X_9$ is a morphism (not just a rational map), rational points on $E$ map to rational points on $X_9$, and so $X_9$ is a form of $\PP^2$ with a rational point and is therefore rational (Theorem~\ref{thm:high-degree-rationality}). 
    This also proves that $X_4$ is rational, because $X_4 \to X_9$ is a birational equivalence.
\end{proof}

Suppose that $X_4$ is not $k$-rational. 
If $X_4$ has rational or quadratic points and $k$ is infinite, then by Theorem~\ref{thm:low-degree-unirationality} the set $X_4(K)$ is Zariski dense, where $K$ is either $k$ or a quadratic extension.  

\begin{lemma}
    \label{lemma:dp4-quadratic-points-well-placed}
    Let $X_4$ be a nonrational del Pezzo surface of degree $4$ over an infinite field $k$, let $K/k$ be an extension of degree at most $2$, and suppose that $X_4(K) \neq \emptyset$. 
    Then there exist infinitely many lines $L \subset \PP^4_k$ meeting $X_4$ in a $0$-cycle of degree exactly $2$.
\end{lemma}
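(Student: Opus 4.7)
My plan is to construct the required lines as spans of degree-$2$ $0$-cycles on $X_4$ coming from Galois-stable configurations of $K$-points, using unirationality to produce infinitely many such configurations. The key preliminary observation is an elementary upper bound: for any line $L \subset \PP^4_k$ not contained in $X_4$, the intersection $L \cap X_4$ has degree at most $2$. Indeed, writing $X_4 = Q_1 \cap Q_2$ as the intersection of two quadrics, $L$ must fail to lie in at least one $Q_i$, so $L \cap X_4 \subseteq L \cap Q_i$, which is a divisor of degree $2$ on $L \cong \PP^1_k$. Consequently, any $k$-rational line meeting $X_4$ in a $0$-cycle of degree at least $2$ meets it in a $0$-cycle of degree exactly $2$.

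Next, I would invoke Theorem~\ref{thm:low-degree-unirationality} to conclude that $X_{4,K}$ is unirational over $K$, and hence that $X_4(K)$ is Zariski dense in $X_{4,K}$. I would then split into two cases. If $X_4(k) \neq \emptyset$, fix $P \in X_4(k)$, and for each $Q \in X_4(k) \setminus \{P\}$ consider the $k$-rational line $L_Q := \overline{PQ}$. If $X_4(k) = \emptyset$, then necessarily $[K:k] = 2$; for each closed point $\p$ of $X_4$ of degree $2$ with residue field $K$, I would associate the unique $k$-rational line $L_{\p} \subset \PP^4_k$ containing the length-$2$ subscheme $\p$, as in the degree-$3$ discussion in the excerpt above (this handles separable and inseparable $K/k$ uniformly).

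In either case I would verify that $L$ is not contained in $X_4$: otherwise it would be a $k$-rational exceptional curve, and by Lemma~\ref{lem:dp4-rat-line-rat} $X_4$ would be $k$-rational, contradicting our hypothesis. The degree bound then forces $L \cap X_4$ to have degree exactly $2$, since $L$ visibly contains a degree-$2$ subscheme of $X_4$ (namely $P+Q$ or $\p$). Finally, different choices of $Q$ (resp.\ of $\p$) give distinct lines, since each line meets $X_4$ in at most two points and so contains at most one point of $X_4(k) \setminus \{P\}$ (resp.\ at most one closed point of degree $2$); Zariski density of $X_4(K)$ then supplies infinitely many such $Q$ or $\p$, completing the argument.

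I do not anticipate a genuine obstacle: the proof is a clean concatenation of unirationality, the B\'ezout-style degree bound, and Lemma~\ref{lem:dp4-rat-line-rat}. The only place to be careful is the inseparable case in the second case split, but the device already used in the degree-$3$ argument---singling out the unique $k$-rational line through an inseparable degree-$2$ closed point---transfers verbatim.
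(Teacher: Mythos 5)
Your argument is correct and follows the paper's proof in its essentials: both read off the B\'ezout bound $\deg(L \cap X_4) \le 2$ for $k$-rational lines $L \not\subset X_4$ from the presentation $X_4 = Q_1 \cap Q_2$, and both use Theorem~\ref{thm:low-degree-unirationality} to supply infinitely many degree-$2$ $0$-cycles whose linear spans are the desired lines. The one substantive difference is how the case $L \subset X_4$ is excluded. The paper notes that $X_4$ has only finitely many lines, and uses Zariski density of $X_4(K)$ to choose the $K$-point $P$ off all of them, so that the $k$-rational line $L_P$ through $P$ and its conjugate cannot lie in $X_4$. You instead invoke Lemma~\ref{lem:dp4-rat-line-rat}: since $X_4$ is nonrational it carries no $k$-rational exceptional curve, hence no $k$-rational line lies on $X_4$ at all, and every $k$-rational line you construct automatically escapes $X_4$. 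Your variant is a touch more economical (no need to steer $P$ away from the lines), though it leans on the nonrationality hypothesis, which the paper's own proof happens not to use; your explicit split between $X_4(k) \ne \emptyset$ and $X_4(k) = \emptyset$ also handles the $K = k$ case more transparently than the paper's ``conjugate of $P$'' phrasing. One small correction: in your first case the points $Q$ range over $X_4(k)$, so when $[K:k] = 2$ you should cite density of $X_4(k)$ itself---obtained by applying Theorem~\ref{thm:low-degree-unirationality} over $k$, which is legitimate since $X_4(k) \ne \emptyset$ and $k$ is infinite---rather than density of $X_4(K)$.
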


\begin{proof}
    Let $X_4 \subset \PP_k^4$ be defined by $Q_1 = Q_2 = 0$.
    Let $L$ be a line meeting $X_4$ in dimension $1$ or in degree greater than $2$.
    Since $L \cap Q_1 \supseteq L \cap X_4$, the intersection $L \cap Q_1$ has the same property, so $L \subset Q_1$; similarly for $Q_2$, so $L \subset X_4$. 
    However, since $X_4(K)$ is Zariski dense by Theorem~\ref{thm:low-degree-unirationality}, there are infinitely many points not lying on a line. 
    Given such a $K$-point $P \in X_4$, there is a unique line $L_P$ defined over $k$ that passes through $P$ and its conjugate (which is equal to $P$ if $K/k$ is inseparable). 
    The above shows that $L_P$ meets $X_4$ in a $0$-cycle of degree $2$.
\end{proof}

Projecting away from a line meeting $X_4$ in a $0$-cycle of degree $2$ gives a map to $\PP^2$ of degree $2$, so $\irr_k X_4 = 2$. 
\smallskip

Could $\irr_k X_4 = 3$ or $4$? 
If there is a map $X_4 \to \PP_k^2$ of degree $3$, then there is an abundance of cubic points; considering the planes or lines spanned by these points, we obtain rational points. 
It is therefore impossible to have $\irr_k X_4 = 3$. 
Over a general field there may be no points on $X_4$ of degree less than $4$~\cite[Theorem 7.6]{CreutzV:Quadratic-points-on-intersections-of-two-quadrics}, in which case $\irr_k X_4 = 4$ by Lemma~\ref{lemma:basics}.  
Creutz and Viray recently proved \cite[Theorem 1]{creutz-viray-no-quadratic-points} that this is possible when $k$ is the rational field $\Q$.
For a local field, \cite[Theorem 1.2 (1)]{CreutzV:Quadratic-points-on-intersections-of-two-quadrics} shows that this cannot occur, and so $\irr_k X_4 \mid 2$. 

If $k$ is finite, then $\irr_k X_4 \mid 2$ as well, but a different argument is needed. 
Let $P \in X_4(k)$.  If $P$ is not on any exceptional curve, then we blow up $P$ to obtain a cubic surface with a rational point, and so $\irr_k X_4 \le 2$. 
If $P$ is on a unique exceptional curve, that curve is defined over $k$, Lemma~\ref{lem:dp4-rat-line-rat} shows that $X_4$ is rational. 
If $P \in E_1 \cap E_2$, where $E_1, E_2$ are exceptional curves, then $E_1 \cup E_2$ is rational, and projecting away from the $\PP^2_k$ gives a conic bundle $X_4 \to \PP^1_k$. 
Let $C_t$ be the generic fibre, viewed as a conic over $k(t)$; then there is a map $C_t \to \PP^1_{k(t)}$ of degree $2$. 
Spreading this out we obtain a rational map of degree $2$ from $X_4$ to a $\PP^1$-bundle over $\PP^1_k$, which is a rational surface. \\

\noindent \underline{\defi{Degree 5}}. 
By Theorem~\ref{thm:high-degree-rationality}, $X_5$ is always $k$-rational, so $\irr_k X_5 = 1$. \\

Before continuing to degree $6$, we recall some basic facts on the change in local invariants of central simple algebras under extension of the base field and corestriction. 

\begin{lemma}
    \label{lemma:brauer-res-cores}
    Let $K/k$ be a finite separable extension and let $A$ be a central simple algebra over~$k$. 
    Let $A'$ be the algebra corresponding to $\cores_{K/k} \res_{K/k} \alpha$, where $\alpha \in \HH^2(\Gal(\ksep/k),({\ksep})^\times)$ is represented by $A$. 
    Then $A'$ represents $[K:k]\, \alpha$.
\end{lemma}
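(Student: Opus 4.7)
The plan is to recognize this as a direct consequence of the classical projection formula in group cohomology, rephrased through the isomorphism $\Br(k) \simeq \HH^2(\Gal(\ksep/k), (\ksep)^\times)$ afforded by Hilbert 90.

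First, I would recall the setup: because $K/k$ is finite separable, $\Gal(\ksep/K)$ is an open subgroup of $G := \Gal(\ksep/k)$ of index $[K:k]$. Under the identifications $\Br(k) = \HH^2(G,(\ksep)^\times)$ and $\Br(K) = \HH^2(\Gal(\ksep/K),(\ksep)^\times)$, the restriction map $\res_{K/k}$ on Galois cohomology corresponds to base change of central simple algebras $A \mapsto A \otimes_k K$, and the corestriction $\cores_{K/k}$ is defined in the standard way on inhomogeneous cochains (via a choice of coset representatives for $\Gal(\ksep/K) \backslash G$).

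Next, I would invoke the standard projection formula from group cohomology: for any subgroup $H \leq G$ of finite index $n$ and any discrete $G$-module $M$, the composition
\[
\cores^G_H \circ \res^G_H \colon \HH^i(G,M) \to \HH^i(G,M)
\]
equals multiplication by $n$ for every $i \geq 0$. Applied to $G = \Gal(\ksep/k)$, $H = \Gal(\ksep/K)$, $M = (\ksep)^\times$, and $i = 2$, this yields
\[
\cores_{K/k}\bigl(\res_{K/k}\,\alpha\bigr) = [K:k]\,\alpha
\]
in $\HH^2(G,(\ksep)^\times)$. Translating back to Brauer classes, the algebra $A'$ representing the left-hand side represents $[K:k]\,\alpha$.

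There is no real obstacle here — the lemma is essentially a citation of a classical result, and the only thing to verify is the compatibility of the various identifications (Brauer group with $\HH^2$, restriction with base change, and the Galois-cohomological corestriction with the algebraic corestriction of central simple algebras). All of this is standard and can be handled by a reference to a text such as Gille--Szamuely.
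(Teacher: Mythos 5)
Your proof is correct and takes the same approach as the paper: both reduce the statement to the classical fact that $\cores \circ \res$ is multiplication by the index on group cohomology (the paper cites Serre's \emph{Local Fields}, Ch.~VIII, Prop.~4; you cite the equivalent result and suggest Gille--Szamuely). The only difference is that you spell out the compatibility of the Brauer-group identifications, which the paper leaves implicit.
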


\begin{proof} 
    This is an immediate consequence of the fact that $\cores_{G/H} \circ \res_{G/H}$ is multiplication by $[G:H]$ on cohomology, which is \cite[Chapter~VIII, Proposition~4]{Serre-Local-Fields}.
\end{proof}

\begin{lemma}
    \label{lemma:local-invariant-change} 
    Let $K/k$ be an extension of nonarchimedean local fields and let $\inv_k\colon \Br k \xrightarrow{\sim} \Q/\Z$ and $\inv_K\colon \Br K \xrightarrow{\sim} \Q/\Z$ denote the standard isomorphisms~\cite[Chapter~XIII, Proposition~6]{Serre-Local-Fields}.
    \smallskip
    \begin{enumerate}[leftmargin=*]
        \item Let $A$ be a central simple algebra over $k$. 
        Then $\inv_K(A \otimes_k K) = [K:k] \inv_k(A)$. 
        \smallskip
        \item Let $B$ be a central simple algebra over $K$. 
        Then $\inv_k \cores_{K/k} (B) = \inv_K(B)$, where $\cores_{K/k}(B)$ denotes the central simple algebra associated to the corestriction of the element of the group $\HH^2(\Gal(K^{s}/K),(K^s)^\times)$ corresponding to $B$.
    \end{enumerate}
\end{lemma}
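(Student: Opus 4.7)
The plan is to prove part (1) by invoking the standard construction of the invariant map in local class field theory, and then to deduce part (2) formally from part (1) together with Lemma~\ref{lemma:brauer-res-cores}.

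For part (1), I would factor $K/k$ through its maximal unramified subextension $k'$, so that $k'/k$ is unramified of degree $f$ and $K/k'$ is totally ramified of degree $e$, with $ef = [K:k]$. In the unramified case, the invariant of a class in $\Br k$ is computed by splitting it over the unique unramified extension of degree $n$ containing $k'$ and reading off the value of the cocycle on Frobenius; one checks that restriction to $k'$ multiplies the invariant by $f$, essentially by tracking how the Frobenius generator changes under passing to a subgroup. For the totally ramified step, the invariant transforms under $\res_{K/k'}$ by multiplication by $e$, by a parallel argument using the valuation exact sequence and the fact that a uniformizer of $k'$ has valuation $e$ in $K$. Composing yields $\inv_K \circ \res_{K/k} = [K:k]\cdot \inv_k$. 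Both steps are carried out in detail in~\cite[Chapter~XIII, \S3]{Serre-Local-Fields}, which I would simply cite.

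For part (2), I would exploit the fact that part (1) forces $\res_{K/k}\colon \Br k \to \Br K$ to be surjective: under the invariant isomorphisms $\Br k \cong \Q/\Z \cong \Br K$, restriction corresponds to multiplication by $[K:k]$, and this is surjective because $\Q/\Z$ is divisible. Given a central simple algebra $B$ over $K$, choose a central simple algebra $A$ over $k$ with $\res_{K/k} A = B$ in $\Br K$. Lemma~\ref{lemma:brauer-res-cores} then yields $\cores_{K/k} B = [K:k]\, A$ in $\Br k$. Applying $\inv_k$ and using part (1) gives
\[
\inv_k \cores_{K/k} B \;=\; [K:k]\, \inv_k A \;=\; \inv_K \res_{K/k} A \;=\; \inv_K B,
\]
as claimed.

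The only genuine obstacle is the cocycle calculation underlying part (1), which is standard but tedious; I would avoid it by citing Serre's \emph{Local Fields} directly, since that reference is already used in the statement. Once part (1) and Lemma~\ref{lemma:brauer-res-cores} are in hand, part (2) is a three-line formal consequence, and no further work is needed.
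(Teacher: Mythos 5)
Your proposal matches the paper's proof in essence: part (1) is cited to Serre's \emph{Local Fields} (the paper points to Chapter~XIII, Proposition~7), and part (2) is deduced exactly as you describe, by using part (1) and divisibility of $\Q/\Z$ to get surjectivity of restriction, choosing a preimage $A$ of $B$, and applying Lemma~\ref{lemma:brauer-res-cores}. The sketch of the unramified/totally-ramified factorization underlying part (1) is extra exposition that the paper omits, but you correctly recognize it as a citation rather than something to reprove.
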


\begin{proof}
    The first statement is~\cite[Chapter~XIII, Proposition~7]{Serre-Local-Fields}. 
    For the second statement, note that the first statement, together with the isomorphism $\Br k \cong \Q/\Z$, implies that the map $\Br k \to \Br K$ taking $A$ to $A \otimes_k K$ is surjective. 
    The result now follows from Lemma~\ref{lemma:brauer-res-cores}.
\end{proof}

\begin{remark} 
    One easily verifies that the same formulas hold for an extension of archimedean local fields, although the invariant maps are no longer isomorphisms. 
\end{remark}

\begin{corollary}
    \label{cor:global-invariant-change} 
    Let $K/k$ be an extension of global fields.
    \smallskip
    \begin{enumerate}[leftmargin=*]
        \item Let $A$ be a central simple algebra over $k$. 
        For every place $\q$ of $K$ lying over the place $\p$ of $k$, the local invariant of $A \otimes_k K$ at $\q$ is $e_\q f_\q$ times that of $A$ at $\p$.
        \smallskip

        \item Let $B$ be a central simple algebra over $K$.
        For every place $\p$ of $k$, the local invariant of $\cores_{K/k} B$ at $\p$ is the sum of the local invariants of $B$ at places of $K$ above $\p$. \qed
    \end{enumerate}
\end{corollary}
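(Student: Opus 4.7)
The plan is to deduce both statements from the local version (Lemma~\ref{lemma:local-invariant-change}) by exploiting the compatibility of restriction and corestriction with localization. The essential global input is the ring decomposition $K \otimes_k k_\p \cong \prod_{\q \mid \p} K_\q$ and the fact that the local component at $\p$ of the image of a central simple algebra in $\Br k$ is computed by completing at $\p$.

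For part (1), I fix a place $\q$ of $K$ above $\p$ and unwind the chain of isomorphisms
\[
    (A \otimes_k K) \otimes_K K_\q \;\cong\; A \otimes_k K_\q \;\cong\; (A \otimes_k k_\p) \otimes_{k_\p} K_\q.
\]
By definition of the ramification index and residue degree, $[K_\q : k_\p] = e_\q f_\q$, so Lemma~\ref{lemma:local-invariant-change}(1) applied to the local extension $K_\q/k_\p$ gives $\inv_{K_\q}\bigl((A \otimes_k K)_\q\bigr) = e_\q f_\q \cdot \inv_{k_\p}(A_\p)$, which is exactly the claim.

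For part (2), I would use the Mackey-type identity for corestriction: the $\p$-component of the global corestriction is the sum of the local corestrictions over the places of $K$ above $\p$, i.e.
\[
    \bigl(\cores_{K/k} B\bigr)_\p \;=\; \sum_{\q \mid \p}\, \cores_{K_\q/k_\p}\bigl(B \otimes_K K_\q\bigr)
\]
in $\Br k_\p$. Applying $\inv_{k_\p}$ and invoking Lemma~\ref{lemma:local-invariant-change}(2) at each~$\q$ replaces each summand by $\inv_{K_\q}(B \otimes_K K_\q)$, which is the local invariant of $B$ at $\q$. Summing gives the statement.

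The only nontrivial ingredient is the semi-local Mackey formula for corestriction used in part (2); the analogous compatibility of restriction with localization that underlies part (1) is essentially tautological. I would either cite the standard Shapiro/Mackey computation in group cohomology, applied to the decomposition of the set of places of $K^s$ above $\p$ as a $\Gal(\ksep/k)$-set, or reduce to the cyclic case (by Albert--Brauer--Hasse--Noether it suffices to check on cyclic algebras) where the identity can be verified by hand from the definition of the corestriction via transfer.
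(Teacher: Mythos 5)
The paper states this corollary with a bare \verb|\qed|, i.e.\ as an immediate consequence of Lemma~\ref{lemma:local-invariant-change}, so there is no written proof to compare against; your argument is the natural way to fill in that derivation, and it is correct. Part (1) follows exactly as you say from $(A\otimes_k K)\otimes_K K_\q \cong (A\otimes_k k_\p)\otimes_{k_\p} K_\q$ and $[K_\q:k_\p]=e_\q f_\q$, and part (2) correctly identifies the only real content as the double-coset (Mackey) decomposition of $\res_{k_\p}\circ\cores_{K/k}$ over $\q\mid\p$, after which Lemma~\ref{lemma:local-invariant-change}(2) reduces each summand to $\inv_{K_\q}(B\otimes_K K_\q)$.
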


\noindent \underline{\defi{Degree 6}}.  
If $X_6(k) \neq \emptyset$ then $X_6$ is $k$-rational, by Theorem~\ref{thm:high-degree-rationality}, so $\irr_k X_6 = 1$ in this case. 
This always holds when $k$ is finite, by Lemma~\ref{lemma:dps-finite-field-pts}.

We show that if $X_6(k) = \emptyset$ then $\irr_k X_6 \in \{2,3,6\}$, and that all three values may occur over number fields while only $2, 3$ occur over local fields. 
To do this, we use results of \cite{blunk}. 
It is well-known that the exceptional curves on $X_6$ form a ``hexagon.'' 
Let $K, L$ be the \'etale algebras of definition of three pairwise disjoint lines and two opposite lines respectively, so that $[K:k] = 2$ and $[L:k] = 3$. 
Let $C_1$ be the set of $K$-isomorphism classes of central simple algebras $B/K$ of rank~$3^2$ such that $B \otimes_K KL$ and $\cores_{K/F} B$ both split, and let $C_2$ be the set of $L$-isomorphism classes of central simple algebras $Q/L$ of rank~$2^2$ such that $Q \otimes_L KL$ and $\cores_{L/F} Q$ both split. 
Then:

\begin{theorem} \cite[Theorem 3.4]{blunk} 
    \label{thm:Blunk}
    There is a bijection between the set of $k$-isomorphism classes of del Pezzo surfaces of degree $6$ over $k$ and triples $(B,Q,KL)$ with $B \in C_1, Q \in C_2$ modulo the equivalence $(B,Q,KL) \sim (B',Q',KL)$ if $B \cong B'$ and $Q \cong Q'$ by isomorphisms that are identified after tensoring with $KL$.
    \qed
\end{theorem}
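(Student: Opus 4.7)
The plan is to construct explicit maps in both directions between $k$-isomorphism classes of del Pezzo surfaces of degree $6$ and equivalence classes of triples, and then verify they are mutually inverse. For the forward map, given $X_6/k$, Theorem~\ref{thm:classification-dPs} shows that $X_{6,\ksep}$ is the blow-up of $\mathbb{P}^2_{\ksep}$ in three points in general position; the six exceptional curves of $X_{6,\ksep}$ form a hexagon, and by \S\ref{sss:Rd} the Galois action factors through the Weyl group $W(A_2\times A_1)$, the dihedral symmetry group of this hexagon. Inside the hexagon lie exactly two inscribed triangles of pairwise disjoint lines (a Galois set of size $2$, whose function algebra is $K$) and three pairs of opposite lines (a Galois set of size $3$, whose function algebra is $L$). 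Over $K$, either of the two triangles is $\Gal(\ksep/K)$-stable and can be contracted to a form of $\mathbb{P}^2$ over $K$, i.e., a Severi--Brauer surface; let $B/K$ be the associated degree-$3$ central simple algebra. Over $L$, a single pair of opposite lines becomes $\Gal(\ksep/L)$-stable and can be contracted to a form of $\mathbb{P}^1\times\mathbb{P}^1$ over $L$, whose Brauer class is represented by a quaternion algebra $Q/L$.

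The four splitting conditions follow from geometric considerations. Since $KL$ is the minimal extension over which every exceptional curve is individually defined, $X_{6,KL}$ is $KL$-rational: contracting any triangle yields $\mathbb{P}^2_{KL}$, so $B\otimes_K KL$ splits, and contracting any pair of opposite lines yields $\mathbb{P}^1_{KL}\times\mathbb{P}^1_{KL}$, so $Q\otimes_L KL$ splits. The vanishing of $\cores_{K/k}B$ and $\cores_{L/k}Q$ in $\Br k$ reflects the $\Gal(\ksep/k)$-stability of the full hexagon, as well as of each of the three pairs of opposite lines taken together; concretely, combining the Severi--Brauer interpretation with Lemma~\ref{lemma:brauer-res-cores} translates these rationality statements into the required corestriction vanishings.

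The inverse direction, which constitutes the main obstacle, is established by Galois descent. Starting from $\mathrm{SB}(B)$ over $K$, which becomes $\mathbb{P}^2_{KL}$ after base change, the compatibility of $B$ and $Q$ modulo $KL$ singles out a $\Gal(KL/k)$-stable closed subscheme of degree $3$ in $\mathrm{SB}(B)_{KL}$, whose blow-up descends through $KL/K$ and then through $K/k$ to the desired $k$-form $X_6$. The hard part is verifying that this descent is effective and that $k$-isomorphism of del Pezzo surfaces corresponds on the nose to the stated equivalence of triples, rather than to a looser Brauer-theoretic equivalence: one must show that every isomorphism $X_6 \cong X_6'$ induces genuine $K$- and $L$-algebra isomorphisms $B\cong B'$ and $Q\cong Q'$ that agree after base change to $KL$, and conversely that every such compatible pair of algebra isomorphisms is realized by a surface isomorphism. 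This requires a careful analysis of the automorphism group of the hexagonal configuration of lines together with the automorphism groups of the algebras $B$ and $Q$, appealing to the structure theorems for Severi--Brauer and quaternion algebras.
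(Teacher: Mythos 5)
The paper does not prove Theorem~\ref{thm:Blunk}; it is quoted verbatim from Blunk's paper with a \textit{qed} symbol signaling that the proof is external, so there is no in-paper argument to compare yours against. Judged on its own, your proposal correctly captures the geometric heart of the forward construction: the hexagon of exceptional curves on $X_{6,\ksep}$, the two inscribed triangles giving the degree-$2$ \'etale algebra $K$, the three opposite pairs giving the degree-$3$ \'etale algebra $L$, the blowdown over $K$ to a Severi--Brauer surface with algebra $B$, and the blowdown over $L$ to a form of $\PP^1\times\PP^1$ with quaternion algebra $Q$ (your identification of the degree-$8$ blowdown as a form of $\PP^1\times\PP^1$ rather than a one-point blowup of $\PP^2$ is the right case). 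However, two substantive gaps remain. First, the derivation of the corestriction conditions $\cores_{K/k}B = 0$ and $\cores_{L/k}Q = 0$ from ``stability of the full hexagon'' is not an argument; Lemma~\ref{lemma:brauer-res-cores} gives $\cores_{K/k}\res_{K/k}\alpha = 2\alpha$, but $B$ is not a priori of the form $\res_{K/k}\alpha$, so the lemma does not directly translate rationality over $KL$ into vanishing of a corestriction to $k$. In Blunk's account this comes from a genuine analysis of the Brauer classes attached to the two blowdowns and their relationship over $k$, not just over $KL$. Second, and more seriously, you explicitly defer the inverse direction and the verification that $k$-isomorphism of surfaces matches exactly the stated equivalence on triples, calling it ``the main obstacle'' requiring ``a careful analysis'' you do not carry out. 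Since this is precisely where the content of the theorem lies, the proposal is an outline of a strategy rather than a proof; the details are worked out in \cite[Theorem~3.4]{blunk}, which you should consult directly.
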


\begin{corollary} \cite[Corollary 3.5]{blunk} 
    The surface represented by $(B,Q,KL)$ has a rational point if and only if $B$ and $Q$ are both split.
    \qed
\end{corollary}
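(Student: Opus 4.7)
The plan is to identify $B$ and $Q$ as Brauer classes of Brauer-Severi varieties naturally attached to $X_6$, and then translate the existence of a $k$-rational point into the triviality of these classes. From the geometric setup underlying Theorem~\ref{thm:Blunk} (see~\cite[\S3]{blunk}), the two triples of pairwise disjoint exceptional curves on $X_{6,\ksep}$ each become $K$-rational as a set after base change to $K$; contracting one such triple yields a Brauer-Severi surface $V_B/K$ of class $[B]$, together with a morphism $\pi_B\colon X_{6,K} \to V_B$. Analogously, each of the three pairs of opposite exceptional curves becomes $L$-rational as a set after base change to $L$, and induces a conic bundle structure $\pi_Q\colon X_{6,L} \to V_Q$ whose base is a Brauer-Severi conic of class $[Q]$.

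For the forward direction, suppose $X_6(k) \neq \emptyset$. By Theorem~\ref{thm:high-degree-rationality}, $X_6$ is $k$-rational, so $X_6(k)$ is Zariski dense; in particular, one can choose a $k$-point $P$ avoiding every exceptional curve. Then $\pi_B(P_K) \in V_B(K)$ is a $K$-rational point of $V_B$, which forces $V_B \simeq \PP^2_K$ and hence $B$ to be split. The same argument applied to $\pi_Q$ shows that $Q$ is split.

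For the backward direction, I would invoke the uniqueness half of Theorem~\ref{thm:Blunk}: the triple $(B, Q, KL)$ determines $X_6$ up to $k$-isomorphism. Hence it suffices to exhibit a single del Pezzo surface $Y/k$ of degree $6$ corresponding to the split triple $(M_3(K), M_2(L), KL)$ which has a $k$-rational point by construction. Given $K$ and $L$, one builds such a $Y$ by Galois descent: take the blow-up of $\PP^2_{\ksep}$ at three points in general position, equipped with the Galois action on the exceptional configuration dictated by $K$ and $L$ but with the Brauer-Severi invariants trivialized by choosing the three points so that they form a closed point of $\PP^2_k$ whose residue field is an appropriate composite built from $K$ and $L$. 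By Blunk's correspondence, $Y \simeq_k X_6$, and $X_6$ inherits rational points from $Y$.

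The principal obstacle is the backward direction: for any prescribed pair of étale algebras $(K, L)$ with the requisite compatibility, one must produce an explicit $k$-rational model whose Brauer-pair invariants under Blunk's correspondence are $(M_3(K), M_2(L))$. Constructing such a model and verifying the match with Blunk's bijection amounts to tracing the correspondence on explicit descent data and performing a cocycle calculation in Galois cohomology; the rest of the argument is then formal.
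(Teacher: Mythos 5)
The paper states this result as a citation to Blunk with no proof, so there is no ``paper's approach'' to compare against; your argument is the natural one and, I believe, close in spirit to Blunk's, but let me point at what works and what does not.

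The forward direction is correct and in fact simpler than you make it. Since $\pi_B\colon X_{6,K}\to V_B$ is a \emph{morphism} (a blow-down of three disjoint $(-1)$-curves over $K$), any $k$-point $P$ of $X_6$ whatsoever gives $\pi_B(P_K)\in V_B(K)$, forcing $B$ to split, and likewise for $Q$ via the conic bundle over $L$. You therefore do not need $k$-rationality, Zariski density of $X_6(k)$, or avoidance of exceptional curves; those steps are superfluous (though harmless).

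The backward direction has the right strategy -- reduce via the uniqueness in Theorem~\ref{thm:Blunk} to producing one surface with a rational point whose invariants are $(M_3(K), M_2(L), KL)$ -- but the construction you sketch cannot realize all such triples. If $Y$ is obtained by blowing up $\PP^2_k$ at a degree-$3$ closed point (with any residue field), then the three exceptional divisors of the blow-up form a $\Gal(\ksep/k)$-\emph{stable} triple, so the Galois action never interchanges the two contractions $Y_{\ksep}\to\PP^2_{\ksep}$. This forces $K\cong k\times k$; a nontrivial quadratic $K$ is unreachable this way. The correct source of a pointed model is the toric one: descend $\Bl_{(1:0:0),(0:1:0),(0:0:1)}\PP^2_{\ksep}$ along a cocycle valued in the dihedral group $S_3\times S_2$ acting on the hexagon, where the $S_2$-factor acts through the standard quadratic Cremona involution $(x:y:z)\mapsto(yz:xz:xy)$; the identity of the open torus is a $k$-point fixed by the whole cocycle, and the induced pair of \'etale algebras is exactly $(K,L)$ with both Brauer classes split. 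With that replacement the argument closes; as written, the ``cocycle calculation'' you defer is not a routine verification of your sketch but a genuinely different construction.
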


\begin{corollary} 
    The minimal degree of a point on $X_6$ divides $6$.
\end{corollary}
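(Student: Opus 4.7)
I plan to exhibit a closed point of $X_6$ of degree dividing $6$ by contracting one of the two triples of pairwise disjoint exceptional curves over its field of definition, and then extracting a low-degree point from the resulting Severi--Brauer variety.

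The étale $k$-algebra $K$ from Theorem~\ref{thm:Blunk} is the field of definition of each triple of pairwise disjoint exceptional curves on $X_{6,\ksep}$. After base change to $K$, one such triple becomes Galois-stable, and by~\cite[Corollary~24.5.2]{Manin-Cubic-Forms} I may contract it, obtaining a birational morphism $X_{6,K} \to Y$ onto a del Pezzo surface of degree $9$ over $K$. By Blunk's classification this $Y$ is precisely the Severi--Brauer variety $SB(B)$, a $K$-form of $\PP^2_K$.

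Since $B$ is central simple of degree~$3$ over $K$, the index of $B$ divides~$3$, and $SB(B)$ carries a closed point whose residue field $M$ satisfies $[M:K] = \operatorname{ind}(B) \in \{1,3\}$. Pulling this point back through the birational morphism $X_{6,K} \to SB(B)$ produces a closed point of $X_{6,K}$ with residue field $M$: the fiber over the chosen point of $SB(B)$ is either a single point, or a copy of $\PP^1_M$ (if that point lies above one of the three centers of the blow-up), and both admit an $M$-point. Composing with the projection $X_{6,K} \to X_6$ yields a closed point of $X_6$ whose degree divides $[M:k] = [M:K]\cdot[K:k]$, which divides $3 \cdot 2 = 6$. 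In the degenerate case $K \cong k \times k$, the triples are themselves $G_k$-stable, so I instead contract one over $k$ directly onto a Severi--Brauer surface $SB(B_1)$ over $k$ (for the corresponding factor $B_1$ of $B$), and the same reasoning yields a point of degree dividing $3$.

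The main obstacle is the identification of the contraction of the Galois-stable triple with $SB(B)$, which relies on Blunk's classification rather than being an elementary computation; once that is accepted, the rest is a short descent along a birational morphism. A secondary technicality is the splitting case $K \cong k \times k$, which is handled above by working with one factor of $B$ at a time.
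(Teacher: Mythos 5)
The proposal is correct, but it takes a genuinely different route from the paper's. The paper invokes Blunk's Corollary 3.5 directly: since $B$ and $Q$ both split over $KL$, the surface acquires a rational point over the degree-$6$ \'etale algebra $KL$ and hence a closed point of degree dividing $6$; a short case analysis handles the degenerate possibility that $L$ has a $k$-rational factor, in which case $B$ is forced to be split and one finds a point over $K$ instead. Your argument instead exploits the geometric content behind Blunk's bijection: you contract a $\Gal(\ksep/K)$-stable triple over $K$ to obtain the Severi--Brauer surface $SB(B)$, locate a closed point there of degree equal to the index of $B$, which lies in $\{1,3\}$, and lift it through the blow-down morphism. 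This never uses $Q$ at all and avoids the case analysis on $L$ entirely, so it is arguably cleaner; you only need to treat $K \cong k \times k$ separately, which is routine. The trade-off is that you must use the explicit geometric construction underlying Blunk's classification --- that contracting the triple over $K$ yields $SB(B)$ --- which is true but is not stated in the bare bijection quoted as Theorem 3.9 in the paper. Both proofs, as written, establish the existence of a closed point of degree dividing $6$, which is what is used downstream in Proposition 3.13, so the logical content is the same.
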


\begin{proof} 
    The algebras $B$ and $Q$ split over $KL$, so the result is immediate unless $L \cong K' \oplus k$, where $[K':K] = 2$.
    However, in that case the fact that $B \otimes_K KL$ is split implies that $B$ is already split, and so there is a point over the field $K$ (which splits $Q$ by hypothesis).
\end{proof}

For a del Pezzo surface $X_6$ of degree $6$, we write $d_{X_6}$ for the minimal degree of a point.

\begin{proposition}
    \label{prop:deg-6-irr-matches-deg}
    Let $X_6$ be a del Pezzo surface of degree $6$ over a field $k$. 
    Then $\irr_k X_6 = d_{X_6}$.
\end{proposition}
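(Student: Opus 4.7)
The plan is to prove the two inequalities $\irr_k X_6 \ge d_{X_6}$ and $\irr_k X_6 \le d_{X_6}$ separately. Since the preceding corollary restricts $d_{X_6}$ to $\{1,2,3,6\}$ and Lemma~\ref{lemma:dps-finite-field-pts} forces $d_{X_6} = 1$ when $k$ is finite (in which case Theorem~\ref{thm:high-degree-rationality} also gives $\irr_k X_6 = 1$), I can assume throughout that $k$ is infinite.

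For the lower bound, I would use a generic fiber argument. Given any dominant rational map $\phi\colon X_6 \dashrightarrow \PP^2_k$ of degree $n$, there is a dense open $U \subset \PP^2_k$ over which $\phi$ restricts to a finite morphism of degree $n$; since $k$ is infinite, I can pick $q \in U(k)$, and then $\phi^{-1}(q)$ is a $k$-rational effective $0$-cycle on $X_6$ of degree $n$. Decomposing this cycle into closed points whose degrees sum to $n$, at least one has degree $\le n$, so $d_{X_6} \le n$.

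For the upper bound I would argue by cases on $d := d_{X_6}$. If $d = 1$ then $X_6$ is $k$-rational by Theorem~\ref{thm:high-degree-rationality}, so $\irr_k X_6 = 1$. If $d = 6$ then Lemma~\ref{lemma:basics}(2) directly gives $\irr_k X_6 \le 6$. For $d \in \{2,3\}$, let $P$ be a closed point of $X_6$ of degree $d$; then $X_{6,k(P)}$ is a del Pezzo surface of degree $6$ with a $k(P)$-rational point, so by Theorem~\ref{thm:high-degree-rationality} it is $k(P)$-rational, and consequently $X_6(k(P))$ is Zariski dense in $X_6$. Each such $k(P)$-point produces a closed point of $X_6$ of degree dividing $d$, and minimality of $d$ forces these all to have degree exactly $d$. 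For $d = 2$ I would choose a general pair of such quadratic points to obtain a $k$-rational effective $0$-cycle of degree $4$; applying Lemma~\ref{lemma:basics}(3) with $n = 6$, $m = 2$, $e = 4 < 5$ yields $\irr_k X_6 \le 6-4 = 2$. For $d = 3$ I would choose a general cubic point, giving a $0$-cycle of degree $3$, and the same lemma with $e = 3$ yields $\irr_k X_6 \le 3$.

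The principal subtlety to address is the ``general position'' hypothesis in Lemma~\ref{lemma:basics}(3): the chosen $0$-cycle must span a linear subspace of the expected dimension in $\PP^6$ (a $\PP^3$ in the $d=2$ case and a $\PP^2$ in the $d=3$ case), and the projection away from this span must be a rational map of the claimed degree. Since the exceptional curves are the only lines on $X_6$ and it contains no higher-dimensional linear subvarieties, the required conditions are nonempty Zariski open conditions on the relevant space of $0$-cycles, and Zariski density of the pool of candidate points extracted from $X_6(k(P))$ ensures that some cycle satisfying all of them exists.
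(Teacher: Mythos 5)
Your proof is correct and reaches the same conclusion, but for the case $d_{X_6} = 2$ it takes a genuinely different route from the paper. The paper projects $X_6 \subset \PP^6$ away from a \emph{single} general point of degree $2$, observes that the image is a del Pezzo surface of degree $4$ carrying a quadratic point, and then invokes the degree-$4$ analysis carried out earlier in the proof (Theorem~\ref{thm:low-degree-unirationality} and Lemma~\ref{lemma:dp4-quadratic-points-well-placed} give a line meeting $X_4$ in a degree-$2$ cycle, projection from which gives a degree-$2$ map to $\PP^2$). You instead project directly from the $\PP^3$ spanned by \emph{two} general quadratic points and apply Lemma~\ref{lemma:basics}(3) with $e=4$ in one shot. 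Geometrically these are the same projection (the composite of the paper's two successive projections is a projection away from the span of the degree-$2$ cycle and the residual line, i.e.\ away from a $\PP^3$ meeting $X_6$ in a degree-$4$ cycle), but logically your argument is self-contained in the sense that it does not rely on the degree-$3$ and degree-$4$ cases already having been settled, only on Lemma~\ref{lemma:basics} and Theorem~\ref{thm:high-degree-rationality}; the trade-off is that you shoulder the ``general position'' verification for a degree-$4$ cycle yourself rather than inheriting it from the earlier analysis. For $d_{X_6}=3$ the two arguments coincide (both project from a degree-$3$ point, yielding a surface of degree at most $3$ in $\PP^3$), and the $d_{X_6}=1$, $d_{X_6}=6$ cases and the lower bound $d_{X_6}\le\irr_k X_6$ are handled identically. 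Your closing remark that the general-position requirements are nonempty Zariski-open conditions satisfied by Zariski density of $X_6(k(P))$ is the right idea and matches the paper's own level of rigor on this point; if one wanted to tighten it one would argue as the paper does for the degree-$9$ case (Lemma~\ref{lemma:no-3-collinear} and Proposition~\ref{prop:generic-general}), checking nonemptiness over $\kbar$ and then descending, but no new ideas are needed.
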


\begin{proof} 
    The case $d_{X_6} = 1$ was already discussed above. 
    Assume then that $d_{X_6} > 1$, which in light of Lemma~\ref{lemma:dps-finite-field-pts} implies that $k$ is infinite.
    By Lemma~\ref{lemma:basics} we have $d_{X_6} \le \irr X_6$, so it suffices to prove the opposite inequality.  
    If $d_{X_6} = 2$ or $3$, then projecting away from a general point of degree~$2$ or~$3$ gives respectively a del Pezzo surface of degree~$4$ or~$3$ with a point of degree~$2$ or~$3$, so by our work on lower degree surfaces we deduce that $\irr X_6 \le d_{X_6}$. 
    Finally, the case $d_{X_6} = 6$ follows from Lemma~\ref{lemma:basics}. 
\end{proof}

With Proposition~\ref{prop:deg-6-irr-matches-deg} in hand, we turn our attention to the possible values of $d_{X_6}$ depending on whether $k$ is a number field or a local field. 

\begin{remark}
    \label{rem:algebras-exist}
    It is known that for the Brauer group of a local or global field the period and index of all Brauer classes are equal: see~\cite[Example 4.1]{abgv}. 
    Thus we may specify central simple algebras simply by giving their invariants.
\end{remark}

\begin{proposition}
    \label{prop:2-3-6-global} 
    Let $k$ be a number field. 
    Then $d_{X_6} \mid 6$, and all possible values occur.
\end{proposition}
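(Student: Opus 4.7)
The divisibility $d_{X_6}\mid 6$ has already been proved, so it remains to exhibit a degree-$6$ del Pezzo surface over a number field realizing each value in $\{1,2,3,6\}$. The case $d_{X_6}=1$ is trivial: blow up $\PP^2_k$ at three $k$-rational points in general position. For the other three values, fix a quadratic field $K/k$ and a cubic field $L/k$ with $[KL:k]=6$, and invoke Theorem~\ref{thm:Blunk}; the plan is to construct Brauer classes $B\in\Br K$ and $Q\in\Br L$ of controlled periods satisfying Blunk's hypotheses ($B\otimes_K KL=0$, $\cores_{K/k}B=0$, and analogously for $Q$).

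The key device is a ``period trick.'' Applying Blunk's corollary to $X_{6,F}$ for any extension $F/k$, the surface has an $F$-point if and only if both $B\otimes_K(K\otimes_k F)$ and $Q\otimes_L(L\otimes_k F)$ split. If $B$ is nontrivial of period $3$, then for every quadratic $K'/k$ the base-change factors $[K\otimes_k K':K]\leq 2$ are coprime to $3$, so Lemma~\ref{lemma:local-invariant-change} shows that $B\otimes_K(K\otimes_k K')$ remains nontrivial, forcing $X_6(K')=\emptyset$. Symmetrically, if $Q$ is nontrivial of period $2$, then for every cubic $L'/k$ and every place $\mathfrak{q}$ of $L$ ramified for $Q$ one has $\sum_{\mathfrak{s}\mid\mathfrak{q}}e_{\mathfrak{s}}f_{\mathfrak{s}}=3$; since $3$ is odd, some term is odd and contributes a nonzero local invariant to $Q\otimes_L(L\otimes_k L')$, giving $X_6(L')=\emptyset$. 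With this in hand: for $d_{X_6}=2$ take $B=0$ and $Q$ nontrivial of period $2$ (a $K$-point appears while no rational point does); for $d_{X_6}=3$ take $Q=0$ and $B$ nontrivial of period $3$ (an $L$-point appears, and no rational or quadratic point does); for $d_{X_6}=6$ take both $B,Q$ nontrivial, of periods $3$ and $2$ respectively, ruling out points of degrees $1,2,3$. Combined with $d_{X_6}\mid 6$, this pins down $d_{X_6}$ to the claimed value in each case.

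The remaining and most delicate task is to realize such $B$ and $Q$ concretely. By Remark~\ref{rem:algebras-exist}, it suffices to prescribe local invariants. For $B$, I seek a prime $\mathfrak{p}$ of $k$ that is split in $K$ and inert in $L$: the two places $\mathfrak{q}_1,\mathfrak{q}_2$ of $K$ above $\mathfrak{p}$ are then inert in $KL/K$ (since $KL\otimes_K K_{\mathfrak{q}_i}=L\otimes_k k_{\mathfrak{p}}$ is a cubic field), and setting $\inv_{\mathfrak{q}_1}(B)=\tfrac{1}{3}$, $\inv_{\mathfrak{q}_2}(B)=\tfrac{2}{3}$, and zero elsewhere satisfies all the requirements. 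For $Q$, I instead seek $\mathfrak{p}$ inert in $K$ and totally split in $L$, yielding three degree-$1$ places of $L$ above $\mathfrak{p}$, each inert in $KL/L$; assigning invariants $(\tfrac{1}{2},\tfrac{1}{2},0)$ then works. Such primes exist with positive density by Chebotarev's theorem applied to the Galois closure of $KL/k$, which is the main technical step.
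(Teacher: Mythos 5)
Your proof is correct, but it takes a genuinely different route for the cases $d_{X_6}=2$ and $d_{X_6}=3$. The paper varies the \'etale-algebra structure of the hexagon: for $d_{X_6}=2$ it uses $L=k^3$ (so the three skew lines are individually rational) with $Q = M_2(k)\times A\times A$ for $A$ a nonsplit quaternion algebra split by the quadratic field $K$; for $d_{X_6}=3$ it uses $K=k\times k$ with $B=A\times A^\vee$, where $A$ is a degree-$9$ algebra over $k$ with invariants $1/3,\,2/3$ at two primes inert in $L$. You instead freeze the Galois structure --- $K$ and $L$ are fields with $[KL:k]=6$ in all three cases --- and vary only the Brauer classes, prescribing local invariants at Chebotarev primes chosen by their splitting in $K$ and $L$. (Your $d_{X_6}=6$ construction coincides with the paper's.) The ``period trick'' you use --- a nontrivial period-$3$ class $B$ survives any quadratic base change by coprimality, and a nontrivial period-$2$ class $Q$ survives any cubic base change since the residue degrees over a ramified place of $L$ sum to the odd number $3$ --- gives a uniform verification of the lower bounds on $d_{X_6}$, whereas the paper leaves that verification largely implicit. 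The trade-off is mild: the paper's choice of non-field \'etale algebras makes the algebras concretely simpler, while your fixed-$(K,L)$ approach is more systematic, reduces the case analysis, and makes the mechanism that forces each value of $d_{X_6}$ explicit.
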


\begin{proof}
    We already know that $d_{X_6} = 1$ is possible. 
    By Theorem~\ref{thm:Blunk}, to specify $X_6$, it is enough to specify the \'etale algebras $K$ and $L$ and central simple algebras $B \in C_1$ and $Q \in C_2$. 

    \begin{enumerate}[leftmargin=.65in]
        \item[$d_{X_6} = 2$:] Let $K$ be a quadratic extension of $k$, let $L = k^3$, let $B = M_3(K)$, and let $Q = M_2(k) \times A \times A$, where $A$ is any quaternion algebra split by $K$. 
        \item[$d_{X_6} = 3$:] Let $K = k \times k$, let $L$ be any cubic extension of $k$, let $A$ be a central simple algebra over $k$ of rank~$9$ with local invariants $1/3, 2/3$ at two primes $\p, \q$ inert in $L/k$, and let $B = A \times A^\vee$ and $Q = M_2(L)$.
        \item[$d_{X_6} = 6$:] Let $K$ be a quadratic extension of $k$, let $\p$ be a prime of $k$ that splits in $K$, let $L$ be a Galois cubic extension of $k$ in which $\p$ is inert, and let $\q$ be a prime that splits in $L$ and is inert in $K$. 
        Let $B$ be a central simple algebra with local invariants $1/3, 2/3$ at the two primes of $K$ above $\p$ and $0$ elsewhere, and let $Q$ be a quaternion algebra with local invariants $1/2$ at two of the primes above $\q$ in $L$ and $0$ elsewhere.
        \qed
    \end{enumerate}
    \hideqed
\end{proof}

\begin{proposition}
    \label{prop:not-6-local} 
    Let $k$ be a local field.
    Then $\irr_k X_6 \le 3$, and all such values occur.
\end{proposition}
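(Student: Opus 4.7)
The plan is to invoke Proposition~\ref{prop:deg-6-irr-matches-deg} to reduce the statement to showing $d_{X_6} \in \{1,2,3\}$ and realizing each value over some local field. I would use the Blunk parametrization of Theorem~\ref{thm:Blunk} and split into two cases according to whether the quadratic \'etale algebra $K$ is a field or is isomorphic to $k \oplus k$.

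When $K$ is a quadratic field extension of the local field $k$, the Blunk condition that $\cores_{K/k}(B)$ split, combined with Lemma~\ref{lemma:local-invariant-change}(2), immediately forces $\inv_K(B)=\inv_k\cores_{K/k}(B)=0$, so $B$ is split over $K$. I would then verify, via the natural base change of Blunk's data to $K$, that the algebras associated to $X_{6,K}$ are both split (the first is $B\oplus B$, split because $B$ is; the second is $Q\otimes_L KL$, split by hypothesis), so Blunk's corollary gives $X_6(K) \ne \emptyset$ and hence $d_{X_6} \le 2$. In the alternative case $K \cong k \oplus k$, one has $KL \cong L \oplus L$, so the Blunk splitting of $Q \otimes_L KL = Q \oplus Q$ forces $Q$ itself to split, and a symmetric base-change argument shows $X_6(L) \ne \emptyset$, giving $d_{X_6} \le [L:k] \le 3$.

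For realizability: $d_{X_6} = 1$ is witnessed by any del Pezzo of degree $6$ with a $k$-point (e.g.\ $\PP_k^2$ blown up at three $k$-rational points in general position). For $d_{X_6} = 2$, I would take $K$ a quadratic extension of $k$, $L = k^3$, $B = M_3(K)$, and $Q = M_2(k) \oplus A \oplus A$, where $A$ is the unique quaternion division algebra over $k$; every quadratic extension of a local field embeds in $A$, so the Blunk conditions hold, the non-split $Q$ precludes a $k$-point, and the bound above forces $d_{X_6} = 2$. For $d_{X_6} = 3$, I would take $K = k \oplus k$, $L$ an unramified cubic extension of $k$, $A$ a central simple algebra over $k$ of rank $9$ with $\inv_k(A) = 1/3$, $B = A \oplus A^\vee$, and $Q = M_2(L)$; since $K$ splits, the contraction of the Galois-stable triple of exceptional curves corresponding to $A$ is a $k$-morphism from $X_6$ to the Severi--Brauer surface of $A$, which has no $K'$-point for any $K'/k$ with $[K':k] \le 2$ because $\inv_{K'}(A \otimes_k K') = [K':k]/3 \ne 0$.

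The main obstacle is verifying the base-change assertions inside Blunk's classification used in the bound, namely that $X_{6,K}$ (resp.\ $X_{6,L}$) is described by the natural tensor products of $B$ and $Q$ to the extended \'etale algebras. This should follow from the intrinsic Galois-equivariant description of $B$ and $Q$ via the hexagon of exceptional curves, but the bookkeeping requires care when $K \otimes_k K$ or $L \otimes_k K$ fails to be a field.
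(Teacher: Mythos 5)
Your proof is correct and follows essentially the same route as the paper: the same dichotomy according to whether the quadratic \'etale algebra $K$ is a field, the same use of Lemma~\ref{lemma:local-invariant-change} to deduce that $B$ (resp.\ $Q$) must split, and the same explicit Blunk triples to realize $d_{X_6} = 1,2,3$. The only minor variation is that when $K$ is a field you take the auxiliary quadratic extension to be $K$ itself, exploiting the Blunk hypothesis that $Q\otimes_L KL$ splits, whereas the paper observes more indirectly that the quaternion algebra $Q$ splits over some quadratic extension of $k$; both are valid, and the base-change compatibility of Blunk's classification that you flag is used implicitly in the paper's argument as well.
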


\begin{proof}
    If $K$ is a field, then $B$ must be split for its corestriction to $k$ to be split; since $Q$ is a quaternion algebra, it splits over a quadratic extension of $k$ not contained in $L$, and so $\irr_k X_6 \mid 2$. 
    If $K$ is not a field, it must be $k \times k$, so $Q \otimes_L KL$ can only be split if $Q$ is already split. 
    Thus $X_6$ has a rational point over any extension where $B$ is split, and in particular $\irr_k X_6 \mid 3$.  For existence, degree $1$ is clear, degree $2$ follows as above, and for degree $3$ take $K = k \times k$, let $L$ be a cubic extension of $k$, let $B = A \times A^\vee$ where $A$ is the division algebra over $k$ of rank~$3^2$ with invariant $1/3$, and let $Q = M_2(L)$. 
\end{proof}
\medskip

\noindent \underline{\defi{Degree 7}}. 
By Theorem~\ref{thm:high-degree-rationality}, $X_7$ is always $k$-rational, so $\irr_k X_7 = 1$. \\

\noindent \underline{\defi{Degree 8}}.  
One case is easy: if $X_8$ is geometrically $\PP^2_{\ksep}$ with one point blown up, then it has a single exceptional divisor $E$, which must be $\Gal(\ksep/k)$-stable and can thus be contracted over $k$ to give a del Pezzo surface of degree $9$ with a $k$-rational point. 
It follows from Lemma~\ref{lemma:basics}(1) and Theorem~\ref{thm:high-degree-rationality} that $\irr X_8 = 1$. %

The case when $X_8$ is geometrically isomorphic to $\PP^1_{\ksep} \times \PP^1_{\ksep}$ is equally simple when $X_8$ has a $k$-rational point, but otherwise it is more difficult. 
Thus we now assume that $X_8$ has no rational points, and in particular that $k$ is infinite by Lemma~\ref{lemma:dps-finite-field-pts}. 
Let $d_{X_8}$ be the minimal degree of an extension where $X_8$ has a rational point. 
We will show that $d_{X_8} = \irr_k X_8 \in \{2,4\}$, and that $4$ is not possible for local fields or number fields. 
To do this, we will use results of Shramov--Vologodsky and Trepalin \cite{shramov2020automorphismspointlesssurfaces,trepalin}. 
We begin by subdividing into two cases depending on the rank of $\Pic X_8$. 

{(1) $\rank \Pic X_8 = 2$.} 
In this case $X_8 \cong C_1 \times C_2$, where $C_1, C_2$ are conics that are unique up to isomorphism and ordering by~\cite[Lemma~7.3]{shramov2020automorphismspointlesssurfaces}. 
If $C_1$ has a rational point but $C_2$ does not, then clearly $\irr X_8 = d_{X_8} = 2$. 
If neither $C_1$ nor $C_2$ has a rational point, we again subdivide into cases. 

{\em Case 1.} 
If there is a separable quadratic extension $K/k$ where both have rational points, then there is a quadratic point in general position, for if the two points lay on the same fibre in either direction that fibre would be rational. 
Thus we may blow up this point and appeal to Proposition~\ref{prop:deg-6-irr-matches-deg} to conclude that $\irr X_8 = d_{X_8} = 2$.  

\begin{remark}
    \label{rem:bad-deg-8-should-happen}
    Consider the conics over $\F_2(t,u,v)$ defined by $tx^2 + y^2 + xz + vz^2$ and $ux^2 + y^2 + xz + vz^2$. 
    Over $\F_2(t,u,\sqrt v)$ they have points such as $(0:\sqrt v:1)$, but we believe that there is no separable quadratic extension where both have points.
    Thus we cannot avoid considering the case that follows. 
\end{remark}

{\em Case 2.} 
Now suppose that there is an inseparable quadratic extension $K/k$ where both $C_1$ and $C_2$ have rational points. 
We will show that generically there is a map $C_1 \times C_2 \to \PP^2_k$ of degree $2$. 
The map will be determined by $K$-rational points but nevertheless defined over $k$; in order to study its degree and image we will pass to $K$, where we may identify $C_1$ and $C_2$ with $\PP^1_K$.

Since the quadratic extension $K/k$ is inseparable, we have $\car K = 2$ and $K = k(\sqrt w)$ for some $w \in k$. 
Let $P \in \PP^2(K) \setminus \PP^2(k)$. 
Permuting the coordinates, we may assume that the last coordinate of $P$ is $1$; the first two cannot both belong to $k$, so say the second does not, being equal to $a+b \sqrt w$ with $b \ne 0$. 
Then the $k$-linear change of coordinates $(x,y,z) \to (x,(y-az)/b,z)$ makes the second coordinate equal to $\sqrt w$, and an obvious change of coordinates makes the first coordinate $0$. 
So if $C$ is a conic having a $K$-point but no $k$-point, then we may assume $C$ to contain $(0:\sqrt w:1)$ and therefore to be defined by an equation of the form $ax^2 + bxy + cxz + y^2 + wz^2 = 0$. 
By rescaling $x$ we may take $b \in \{0,1\}$.  We may parametrize $C$ by inverting the isomorphism $C \to \PP^1_K$ given by $(x:y-\sqrt w z)$.

We now consider a generic situation, where $C_1, C_2$ are defined respectively by 
\[
    a_1x^2 + xy + c_1xz + y^2 + wz^2\quad \text{and}\quad a_2x^2 + xy + c_2xz + y^2 + wz^2,
\]
where $a_0, a_1, c_0, c_1, w$ are independent transcendentals: that is, we take $k = \F_2(a_0,a_1,c_0,c_1,w)$ and $K = k(\sqrt w)$. 
Parametrizing as above and considering the images of the points $(1:0), (0:1), (1:1) \in \PP^1(k)$, we obtain three points on each $C_i$, and hence three points on $C_1 \times C_2$, all defined over~$K$. 
The dimension of the space of $(1,1)$-forms vanishing on all three is $3$-dimensional (each point imposes two $k$-linear conditions and they turn out to be independent). 
Thus we obtain a map $C_1 \times C_2 \to \PP^2_k$ defined over $k$.

To compute its degree, it is easier to extend the base to $K$ and compose with the map $\PP^1_K \times \PP^1_K \to C_1 \times C_2$, which is given by $(2,0)$-forms on the first factor and $(0,2)$-forms on the second. 
The composition $\PP^1_K \times \PP^1_K \to C_1 \times C_2 \to \PP^2_K$ is thus given by $(2,2)$-forms. 
Generically one checks that the images of $x_0 = 0$ and $y_0 = 0$ are distinct curves (in fact lines), so the map is dominant; the degree of the map is the square of the class of a $(2,2)$-form less the degree of the base locus, so $2 \cdot 2 \cdot 2 - 6 = 2$. 
Thus the degree of irrationality is $2$.

This does not prove that the degree of irrationality is always $2$, since it is possible that when one specializes the parameters the images of $x_0 = 0$ and $y_0 = 0$ cease to be distinct curves. 
Though we have not done this, we expect that it is always possible to choose different curves in $\PP^1 \times \PP^1$ or different points of $\PP^1$ in such a way that the degree is shown to be $2$. 
If the degree is not $2$, it must be $4$, because there is an obvious map of degree $4$ given by $C_i \to \PP^1$ on both factors and no map of degree $3$ (such a map would give points of degree $3$ and hence rational points on at least one of the $C_i$).

{\em Case 3.}  
If there is no quadratic extension where $C_1$ and $C_2$ both have points, then $d_{X_8} = 4$. 
Since every conic over $k$ admits a map of degree $2$ to $\PP^1_k$, there is a map of degree $4$ from $X_8$ to $\PP^1_k \times \PP^1_k$, which is a rational surface, and $\irr_k X_8 = 4$ as well (we use Lemma~\ref{lemma:basics}(4) to ensure that $\irr_k X_8 \ge 4$). 
By~\cite[Theorem 1.7 (c)]{trepalin}, this holds if and only if the class in $\Br k$ corresponding to the product of those given by $C_1, C_2$ is not represented by a conic. 
This is not possible for a local field or a number field (see Remark~\ref{rem:algebras-exist}), but for some fields it is. 
In \cite[Lemma 3.3]{trepalin} it is explained how to construct an example.

{(2) $\rank \Pic X_8 = 1$.}  
By \cite[Lemma~7.3 (i)]{shramov2020automorphismspointlesssurfaces}, the surface $X_8$ is isomorphic to the restriction of scalars of a conic $C$ from $L$ to $k$, where $L$ is the field of definition of $\Pic X_8$, a separable quadratic extension of $k$. 
It follows immediately from this result that $X_8 \otimes_k L \cong C \times C^\sigma$, where $\sigma$ is the nontrivial automorphism of $L$ over $k$, since over $L$ the Picard rank is $2$, and if the two conic factors were not conjugate over the separable
extension $L/k$ the surface would not be defined over $k$. 
Trepalin shows in \cite[Corollary~2.2]{trepalin} that, for every extension $K/k$, $X_8(K)$ is empty if and only if $X_8(KL)$ is empty. 
Indeed, this is trivial if $L \subseteq K$, and if not then $X_8 \times_k K$ is still of the same type. 
(Trepalin makes the additional hypothesis that $k$ is perfect, but this is used only to conclude that $L/k$ is separable, which is proved in \cite[Lemma~7.3]{shramov2020automorphismspointlesssurfaces}.)

If $C$ has $L$-points then $X_8$ has $k$-points, so assume we are not in this case.  
If there is a quadratic extension $K/k$ such that $C(KL) \ne \emptyset$, then also $C^\sigma(KL) \ne \emptyset$ and so $X_8(K) \ne \emptyset$.  

First we consider the case of separable $K$. In this case a $K$-point of $X_8$ is again in general position. 
If not, the two geometric points would be in the same fibre for one of the projections, allowing us to distinguish the projections and showing that $L \subseteq K$. 
Since $K, L$ are both quadratic extensions, this would mean that $L = K$ and that $C$ has $L$-points, contrary to our assumption. 
So, as before, we blow it up and obtain a del Pezzo surface of degree $6$ with a quadratic point, so $\irr_k X_8 = 2$. 
And as before we are always in this case when $k$ is a local field or a number field.

If $K$ is inseparable, we proceed as in Case 2 of the situation where $\rank \Pic X_8 = 2$. 
Namely, we define $k = \F_2(t,u,v_0,v_1,w_0,w_1)$ and take $K = k(\sqrt u), L = k(\alpha)$ respectively to be generic separable and inseparable quadratic, so that $\alpha^2 + \alpha + t = 0$. 
As before, we choose $C_1$ to be a generic conic over $L$ with a $KL$-point; the point can be taken to be $(0:\sqrt u:1)$, and $C_1$ is defined by $(v_0 + v_1 \alpha) x^2 + xy + (w_0 + w_1) \alpha xz + y^2 + uz^2$. 
We obtain $C_2$ by Galois conjugation, replacing $\alpha$ by $\alpha + 1$. 
Again by taking three nonreduced points on $C_1 \times C_2$ as base points we obtain a dominant map to $\PP^2_L$ whose degree is $2$, and conclude that generically $\irr_k X_8 = 2$. 
As before, we believe but cannot conclude that this always holds; if not, there is still a map of degree $4$ to the restriction of scalars of $\PP^1_L$, a rational surface, and so $\irr_k X_8 = 4$.

\begin{remark} 
    Though closely related to the previous argument when $K$ was inseparable, this one seems unavoidably different. 
    We cannot specialize from the new situation to the previous one because of the constraints on the conics, nor in the opposite direction because of the enlarged field of definition.
\end{remark}

Otherwise, we must have $d_{X_8} > 2$. 
Choose a separable quadratic polynomial $f$ such that $C(M) \ne \emptyset$, where $M/L$ is the extension defined by $f$. 
Let $M^\sigma$ be the extension of $L$ defined by $f^\sigma$; then $C^\sigma$ has points over $M^\sigma$. 
Let $N$ be the compositum of $M$ and $M^\sigma$; then $N$ is normal and hence Galois, because the compositum of separable extensions is separable. 
The Galois group $\Gal(N/k)$ is contained in $D_4$. It cannot be $C_2 \times C_2$, because then we would have a quadratic extension $K/k$ as in the last paragraph. 
If it is $C_4$, then we consider $M$-orbits of points found by choosing $M$-points of $C, C^\sigma$ not fixed by any nontrivial automorphism of $M$.
Such an orbit can be viewed as a quartic point of $X_8$ over $k$; either it is in general position or it is contained in a $(1,1)$-curve. 
In the first case it can be blown up to obtain a del Pezzo surface of degree $4$. 
In the second case the blowup has a unique curve of self-intersection $-2$, so we can embed it in $\PP^4_k$ with a unique ordinary double point.  
Projecting away shows that $\irr_k X_8 = 2$, a contradiction because $d_{X_8} > 2$. 
We conclude that $d_{X_8} = \irr_k X_8 = 4$.

Finally, if the Galois group is $D_4$, then let $Q$ be a quartic extension of $k$ contained in $N$ and not containing $L$, which exists by Galois theory.
Then $X_8$ has $QL$-points and hence $Q$-points. 
Let $P = \{P_1, P_2, P_3, P_4\}$ be a $Q$-orbit.  
If an element of $Q$ has only two different images in $N$, it must be contained in the quadratic subfield of $Q$, which contradicts a previous hypothesis. 
Thus, when we identify $X_8 \otimes_k N$ with $\PP^1_N \times \PP^1_N$, the $P_i$ have all different images in both factors. 
So the only possible curve of self-intersection less than $-1$ in the blowup of $X_8$ at $P$ is the strict transform of a $(1,1)$-curve containing the entire orbit. 
Thus, the blowup is either a smooth del Pezzo surface of degree $4$ or a singular surface with an ordinary double point. 
As in the previous paragraph, we conclude that the first case must occur and that $d_{X_8} = \irr_k X_8 = 4$. 

\begin{remark}
    \label{rem:dont-know-4} 
    We have not proved that $\irr_k X_8 = 4$ in the case of Picard rank~$1$, since we do not know that there exists a quadratic extension $L/k$ and a conic $C$ over $L$ such that there is no quadratic extension $K/k$ such that $C$ has $KL$-points. 
    We expect that this holds for a generic conic defined by $x^2 - (a+b\sqrt t)y^2 - (c+d\sqrt t)z^2$ over $k_0(a,b,c,d,t,\sqrt{t})$ for any field $k$ with $\car k \ne 2$. 
    In any case, our uncertainty on this point does not change the list of possible degrees.
\end{remark}

\noindent \underline{\defi{Degree 9}}. 
$X_9$ is a Brauer--Severi variety. 
Assume that $X_9$ is not rational: then it comes from a nontrivial cohomology class $\beta \in \HH^1(\Gal(\ksep/k),\PGL_3(\ksep))$, which gives rise to a central simple algebra $A/k$ of rank~$3^2$, which is split by a separable extension $K/k$ of degree $3$. 
Thus $X_9$ admits a $0$-cycle $C_3$ of degree $3$. 
The intersection of $X_9$ with a codimension-$2$ linear subspace $L$ containing $C_3$ is a $0$-cycle $C_9$ of degree $9$ (it cannot be a curve, because every curve on $X_9$ is linearly equivalent to the intersection of a hypersurface with $X_9$).  
Let $C_6$ be the $0$-cycle $C_9 - C_3$. 
We now prove that $L$ can be chosen such that the points of $C_6$ are in linearly general position---in other words, such that the space of linear forms vanishing on $C_6$ has dimension $4$.

First note that the hypothesis that $X_9$ is not rational implies that $k$ is infinite (combine Lemma~\ref{lemma:dps-finite-field-pts} and Theorem~\ref{thm:high-degree-rationality}). 

\begin{lemma}
    \label{lemma:no-3-collinear}
    No three points of $X_9$ are collinear.
\end{lemma}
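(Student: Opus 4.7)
The plan is to base-change to $\kbar$ and reduce to the classical fact that the cubic Veronese surface in $\PP^9$ has no three collinear points. Since collinearity of points in a projective space is preserved by base change, it suffices to check this geometrically. Over $\kbar$ one has $X_9 \cong \PP^2_{\kbar}$, and the embedding $X_9 \hookrightarrow \PP^9_k$ implicit in the preceding paragraph (the anticanonical one, as witnessed by the degree-$9$ calculation used there) becomes the cubic Veronese $\nu_3 \colon \PP^2_{\kbar} \hookrightarrow \PP^9_{\kbar}$ attached to the complete linear system $|\OO(3)|$.

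Three distinct points $\nu_3(P_1), \nu_3(P_2), \nu_3(P_3)$ are then collinear in $\PP^9_{\kbar}$ if and only if the three evaluation functionals they induce on $H^0(\PP^2_{\kbar}, \OO(3))$ are linearly dependent, i.e., if and only if the three points impose strictly fewer than three conditions on the $10$-dimensional space of plane cubics. So the lemma is reduced to showing that any three distinct points of $\PP^2_{\kbar}$ impose three independent conditions on cubics.

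For this, for each index $i \in \{1,2,3\}$ it suffices to produce a cubic vanishing at the other two points $P_j, P_k$ but not at $P_i$. If $P_1, P_2, P_3$ do not lie on a common line of $\PP^2_{\kbar}$, the product of the line through $P_j$ and $P_k$ with two further lines missing $P_i$ works. The only subtle case is the degenerate one where the three geometric points happen to be collinear on $\PP^2_{\kbar}$, where the obvious construction breaks down; there one can instead take the product of two distinct lines — one through $P_j$ only and one through $P_k$ only, both chosen to avoid $P_i$ — times a third line missing $P_i$. In either case the three resulting cubics make the evaluation map onto $\kbar^3$ surjective, so no serious obstacle arises beyond the bookkeeping of this degenerate case.
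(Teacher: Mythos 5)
Your proof is correct, but it takes a genuinely different route from the paper's. The paper dispatches the lemma in two lines: the cubic Veronese surface is cut out by quadrics, so a line meeting $X_9$ in degree $\geq 3$ would be contained in every such quadric, hence in $X_9$ itself — impossible, since $X_9$ contains no lines. (This is the same ``trapped inside the defining quadrics'' argument already used for $X_4$ in Lemma~\ref{lemma:dp4-quadratic-points-well-placed}, which the paper simply cites.) You instead unwind the definition of the Veronese embedding and argue directly that three distinct points of $\PP^2_{\kbar}$ impose independent conditions on $H^0(\OO(3))$, handling the sub-case where the three source points are themselves collinear in $\PP^2$ by an explicit choice of reducible cubics. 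Your argument is more self-contained and makes the role of $3$-very-ampleness of $\OO(3)$ visible, at the cost of some casework; the paper's is shorter and, incidentally, gives the marginally stronger conclusion that no line meets $X_9$ in a subscheme of degree $\geq 3$ (nonreduced intersections included), which your evaluation-functional argument as written treats only for three distinct geometric points. For the application (the cycle $C_3$ comes from a separable cubic splitting field, so its geometric support is three distinct points), your version is sufficient.
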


\begin{proof}
    In view of the absence of lines on $X_9$, this follows from the fact that $X_9$ is defined by quadrics as in the proof of Lemma~\ref{lemma:dp4-quadratic-points-well-placed}.
\end{proof}

Let $G$ be the subvariety of the Grassmannian of $\PP^7$'s in $\PP^9$ containing a given $\PP^2$.
Since $G$ is a rational variety over an infinite field, its rational points are Zariski dense.  
We prove:

\begin{proposition}
    \label{prop:generic-general}
    The subvariety $N$ of $G$ corresponding to subspaces such that the residual intersection $(X_9 \cap L) \setminus C_3$ is not in linearly general position is not Zariski dense.
\end{proposition}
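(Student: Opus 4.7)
The plan is to exhibit a single $\kbar$-point of $G$ lying outside $N$; since $G$ is irreducible and $N$ is closed in $G$, this forces $N$ to be a proper closed subvariety of $G$ and hence not Zariski dense. I work throughout over $\kbar$, identifying $X_9 \otimes_k \kbar$ with its anticanonical image in $\PP^9_{\kbar}$, which is the $3$-uple Veronese surface $\nu_3(\PP^2_{\kbar})$ since $-K_{\PP^2} = \mathcal O(3)$. Let $P_1,P_2,P_3 \in \PP^2(\kbar)$ be the three points whose image under $\nu_3$ is the geometric support of $C_3$; by Lemma~\ref{lemma:no-3-collinear} they are not collinear in $\PP^2_{\kbar}$.

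Under $\nu_3$, codimension-$2$ linear subspaces $L \subset \PP^9_{\kbar}$ containing the plane $\langle C_3\rangle$ correspond bijectively to pencils of cubics on $\PP^2_{\kbar}$ vanishing at $P_1,P_2,P_3$, and the scheme $L\cap X_9$ is precisely the length-$9$ base locus of the corresponding pencil (by Bezout applied to two cubics whose common zero locus is $0$-dimensional). Thus $C_6$ consists of the six \emph{residual} base points $Q_1,\dots,Q_6$ of the pencil.

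The key construction is to arrange that the pencil contain a smooth cubic $F$ through $P_1,P_2,P_3$. Such $F$ exists because smoothness is a Zariski-open condition on the $6$-dimensional projective space of cubics vanishing at $P_1,P_2,P_3$, and this linear system is not contained in the discriminant (using that $P_1,P_2,P_3$ are not collinear). Pick any such smooth $F$ and take $G$ to be a sufficiently general cubic through the three points so that $F$ and $G$ meet transversely in nine reduced points. The six residual points $Q_1,\dots,Q_6$ all lie on $F$, and since $F$ is smooth, hence irreducible, Bezout implies that any line in $\PP^2_{\kbar}$ meets $F$ in at most three points. Therefore no five of the $Q_i$ are collinear.

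To conclude, I invoke the classical Castelnuovo-type fact that $n \le 2d+1$ distinct points of $\PP^2$ impose independent conditions on the linear system of curves of degree $d$ if and only if no $d+2$ of them are collinear. Applied with $n=6$ and $d=3$, this shows that the space of cubics on $\PP^2_{\kbar}$ vanishing on $\{Q_1,\dots,Q_6\}$ has dimension exactly $10-6=4$. Pulling back through $\nu_3$, which identifies cubics on $\PP^2$ with linear forms on $\PP^9$, the space of linear forms on $\PP^9_{\kbar}$ vanishing on $C_6$ has dimension $4$. This is exactly the linearly general position condition in the statement, so the chosen pencil yields a point of $G\setminus N$, completing the proof. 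The only nontrivial input is the Castelnuovo-type lemma on independent conditions; the remaining steps are the routine translations between pencils of cubics on $\PP^2$ and codimension-$2$ subspaces of $\PP^9$ containing $\langle C_3\rangle$.
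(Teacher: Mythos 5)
Your proof is correct, but it reaches the key step (exhibiting a point of $G$ outside $N$) by a genuinely different route than the paper's. The paper's argument, after noting that $N$ is closed (as the inverse image of a determinantal locus under a map $G \to \Sym^6(X_9)$) and reducing to $\kbar$, verifies $N \ne G$ by writing down an explicit pencil of cubics and checking computationally that the six residual points span a $\PP^5$. You instead give a conceptual argument: choose a smooth cubic $F$ through the three points, take a general second cubic $G$ to obtain six reduced residual points $Q_1,\dots,Q_6 \subset F$, observe by B\'ezout that no line contains more than three of the $Q_i$ since $F$ is irreducible, and invoke the Castelnuovo-type lemma that six points in $\PP^2$ impose independent conditions on cubics whenever no five are collinear. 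This avoids any computer verification and makes transparent \emph{why} a generic pencil works, at the cost of relying on a classical lemma whose statement (and its validity in all characteristics) you should cite carefully --- the version with $n \le 2d+1$ indeed sidesteps the conic exception, so the application with $n=6$, $d=3$ is valid. Two small things to tighten: you assert that $N$ is closed in $G$ without justification (the paper spends a few lines on this, via the locus in $\Sym^6(X_9)$ cut out by vanishing minors), and your existence claim for a smooth cubic through three non-collinear points deserves a word more than ``the linear system is not contained in the discriminant'' --- for instance, the blown-up surface is a degree-$6$ del Pezzo over $\kbar$ whose anticanonical system has a smooth member. Neither is a real gap, but both should appear for completeness.
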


\begin{proof}
    First we note that $N$ is in fact a closed subvariety. 
    Indeed, we have a map $G \to \Sym^6(X_9)$ and $N$ is the inverse image of the locus $L$ of sets of six points not in linearly general position. 
    But $L$ is defined by the vanishing of some minors, so it is closed in $\Sym^6(X_9)$; it follows that $N$ is closed. 
    Because $G$ is irreducible, it suffices to show that $N \ne G$. 
    Again this can be done over the algebraic closure, so we may take $X_9$ to be the $3$-tuple embedding of $\PP^2$ and $C_3$ to be $(1,0,0),(0,1,0)$, $(0,0,1)$ as in Lemma~\ref{lemma:no-3-collinear}.  
    A random choice of codimension-$2$ subscheme cut out by the linear forms pulling back to
    \[
        xy^2 + x^2z - xyz + xz^2 + yz^2,
    x^2y - xy^2 + x^2z + xyz - y^2z + xz^2
    \]
    yields a residual intersection consisting of $6$ points not contained in any $\PP^4$. 
\end{proof}

Now that we know that there is a $0$-cycle of degree $6$ in linearly general position, we define a rational map $X_9 \dasharrow \PP^3_k$ by projecting away from it. 
This map cannot contract any curve, because no $k$-rational curve is contained in a linear subspace of codimension greater than $1$, so the image is a surface.
The base scheme has degree $6$, so the image has degree at most $3$; thus $\irr_k X_9 \le 3$.

Now $X_9$ cannot have any rational points, since a Brauer--Severi variety with a rational point is rational; nor can it have any quadratic points, since $\beta$ restricts to a nontrivial class in $\Br(L)$ for all quadratic extensions $L/k$. 
(We have $\cores_{L/k} \res_{L/k} \beta = 2 \beta$ by Lemma~\ref{lemma:brauer-res-cores}, and this is not $0$ because $\beta$ has order $3$.)
It follows that $\irr_k X_9 \ge 3$; combining with the above, we obtain $\irr_k X_9 = 3$ when $X_9$ is a nontrivial Brauer--Severi variety.
As already pointed out, this cannot be done over finite fields, but over local fields and number fields there is no difficulty.
\qed
\\

We summarize the results in a table. 
The first column gives the possible degrees of a del Pezzo surface and the remaining columns indicate the
possible degrees of irrationality over the indicated types of fields. 

\begin{table}[ht]
\centering
\captionsetup{skip=0.5em}  %
\rowcolors{2}{gray!10}{white} %
\begin{tabular}{c|cccc}
\toprule
\textbf{\ $d$\ } & \textbf{Finite} & \textbf{Local} & \textbf{Number} & \textbf{Arbitrary} \\
\midrule
1 & 1,2 & 1,2 & 1,2 & 1,2 \\
2 & 1,2 & 1,2 & 1,2 & 1,2 \\
3 & 1,2 & 1,2,3 & 1,2,3 & 1,2,3 \\
4 & 1,2 & 1,2 & 1,2,4 & 1,2,4 \\
5 & 1 & 1 & 1 & 1 \\
6 & 1 & 1,2,3 & 1,2,3,6 & 1,2,3,6 \\
7 & 1 & 1 & 1 & 1 \\
8 & 1 & 1,2 & 1,2 & 1,2,4 \\
9 & 1 & 1,3 & 1,3 & 1,3 \\
\bottomrule
\end{tabular}
\caption{The degree of irrationality of del Pezzo surfaces, tabulated by the degree $d$ of the surface for an assortment of field types.}
\label{table:delPezzos}
\end{table}

\newpage
\section{Potential additional or future work}
\label{S:potential}
In this final section we describe a few varieties for which it seems
interesting to study the degree of irrationality.

\subsection{Other Rational Surfaces} 
Having studied del Pezzo surfaces, it is natural to ask about the other type of minimal rational surfaces, namely conic bundles. 
If the base is $\PP^1_k$, then either the surface is rational and $\irr_k X = 1$, or it is not, in which case a $2$-to-$1$ cover of $\PP^1(k(t))$ by the generic fibre shows that $\irr_k X = 2$. 
Other cases may be more interesting, however. 
To begin, of the five types of conic bundles in~\cite[Proposition 2.5]{trepalin}, type $1$ is already considered in this paper, types $2-4$ are all birational and type $3$ is considered in this paper, and for the examples of type $5$ constructed in~\cite[Example 2.6]{trepalin}, it is clear that $\irr_k X \mid 4$. 
However, it is not clear to us whether degree $4$ is possible for a number field.

\subsection{K3 surfaces} 
Continuing with the work on K3 surfaces over $\C$ referred to in the introduction, it would be interesting to study the degree of irrationality for K3 surfaces with varying Picard lattices.
If there is a genus-$1$ fibration with base $\PP^1_k$ and a section or a $2$-section, then the quotient by the negation map gives a cover of degree $2$ of a rational surface. 
Thus attention in this case should be focused on surfaces with small Picard rank. 
Even basic questions may be difficult. 
For example, Moretti shows \cite[Theorem 1.1]{moretti} that $\irr_\C X = 3$ if $X$ is a smooth K3 surface of degree $4, 6, 10$ and Picard number $1$. 
If $X$ is a smooth quartic over $k$ with a point, then clearly $\irr_k X = 3$, and if $X$ has no points then $\irr_k X = 4$. 
What can be said in degree $6$ or $10$ over an arbitrary field? 
One can ask more refined questions about the invariants defined by Moretti as well. 
For example, Moretti~\cite[Corollary 2.8]{moretti} defines a component of the Brill--Noether locus $W_4^2(X)$ (\cite[before Theorem 1.2]{moretti}) for $X$ a smooth K3 surface of degree $8$ that is a $\PP^3$-bundle over the Fourier--Mukai 
partner of~$X$. 
Can it be determined as a Brauer--Severi bundle over an arbitrary field?

\subsection{Fano Threefolds}
In this paper we have studied Fano surfaces.
Likewise, one could consider Fano threefolds, the deformation families of which have been classified over $\C$ \cite[Chapter 12]{iskovskikh-prokhorov}.
As a first step one should probably consider those known to be geometrically rational. We obtain forms of such threefolds from embeddings of rational ones by multiples of the anticanonical divisor (just as the general del Pezzo surface of degree $n$ is in the same component of the Hilbert scheme as the anticanonical embedding of $\PP^2$ blown up in $9-n$ points).  For example, let us consider forms of a $(1,1)$-hypersurface in $\PP^2 \times \PP^2$ \cite[\S 12.6, no.~32]{iskovskikh-prokhorov}.  Such a variety is rational.  The anticanonical embedding is a hyperplane section of the Segre embedding of $\PP^2 \times \PP^2$.  If $\ell/k$ is a quadratic extension, then there
is a $k$-form $V$ of $\PP^2 \times \PP^2$ that is isomorphic to $\PP^2 \times \PP^2$ over $\ell$.  A hyperplane section of the Segre embedding of $V$ is
a $k$-form of a $(1,1)$-hypersurface in $\PP^2 \times \PP^2$ but is not
obviously rational.  One might study the degree of irrationality of such a variety.

\bibliographystyle{alpha}
\bibliography{aim-arith-irr}
\end{document}